\newenvironment{algorithm-hbox}{\hbadness=10000\begin{algorithm}}{\end{algorithm}}
\theoremstyle{plain}
\newtheorem{theorem}{Theorem}
\newtheorem{lemma}[theorem]{Lemma}
\newtheorem{corollary}[theorem]{Corollary}
\newtheorem{prop}[theorem]{Proposition}
\numberwithin{equation}{section}
\newenvironment{enumeratei-continued}{\begin{enumerate}[label=\textup{(\roman*)}, noitemsep, topsep=1.5mm, labelindent=.8em, leftmargin=*, widest=., resume=my]}{\end{enumerate}}
\renewcommand{\epsilon}{\varepsilon}
\renewcommand{\leq}{\leqslant}
\renewcommand{\geq}{\geqslant}
\newcommand{\Nat}{\mathbb{N}}
\newcommand{\evS}{\mathcal{A}}
\newcommand{\vS}{\mathcal{X}}
\newcommand{\vars}{\mathrm{vbl}}
\newcommand{\prob}{\mathrm{Pr}}
\newcommand{\dg}{\mathcal{D}}
\newcommand{\ccc}{\mathcal{CC}}
\newcommand{\acc}{\mathcal{AC}}
\newcommand{\MGC}{MGC}
\newcommand{\MGCr}{MGC_r}
\newcommand{\dcc}{\mathcal{DC}}
\newcommand{\ncc}{\mathcal{NC}}
\newcommand{\idcc}{\mathcal{DI}}
\newcommand{\incc}{\mathcal{NI}}
\begin{document}
\title{Multipass greedy coloring of simple uniform hypergraphs}
\author{Jakub Kozik}
\address{Theoretical Computer Science Department, Faculty of Mathematics and Computer Science, Jagiellonian University, Krak\'{o}w, Poland}
\email{Jakub.Kozik@uj.edu.pl}

\thanks{Research of J.\ Kozik  was supported by Polish National
Science Center within grant 2011/01/D/ST1/04412.}

\begin{abstract}
Let $m^*(n)$ be the minimum number of edges in an $n$-uniform simple hypergraph that is not two colorable. 
We prove that $m^*(n)=\Omega( 4^n/\ln^2(n))$.
% improving previous bound of the order $4^n/ n^{o(1)}$.
% $m^*(n)=\Omega(4^n/ n^{\epsilon} )$, for any $\epsilon >0$.
%, derived by Kostochka  (2006) from the result of Szab\'{o} (1990) on maximum vertex degree in a simple uniform hypergraph that are not two colorable. 
Our result generalizes to $r$-coloring of $b$-simple uniform hypergraphs. For fixed $r$ and $b$ we prove that a maximum vertex degree in $b$-simple $n$-uniform hypergraph that is not $r$-colorable must be $\Omega(r^n /\ln(n))$. By trimming arguments it implies that every such graph has $\Omega((r^n /\ln(n))^{b+1/b})$ edges.
For any fixed $r \geq 2$ our techniques yield also a lower bound $\Omega(r^n/\ln(n))$ for van der Waerden numbers $W(n,r)$.
\end{abstract}

\maketitle

\section{Introduction and main results}
A hypergraph is a pair $(V,E)$, where $V$ is a set of vertices and $E$ is a family of subsets of $V$. 
Hypergraph is \emph{$n$-uniform} if all its edges have exactly $n$ elements. 
It  is \emph{simple} if any intersection of two edges has at most one element. 
A hypergraph is \emph{two colorable} if it is possible to assign to each vertex color blue or red in such a way that no edge is monochromatic. 
For historical reasons property of being two colorable is also named \emph{property B}. 
For $n\in \Nat$ let $m(n)$ be the smallest number of edges of an $n$-uniform hypergraph that is not two colorable. 
Parameter $m(n)$ has been introduced by  Erd{\H{o}}s and Hajnal in \cite{ErHaj1961}. 
The best known bounds on $m(n)$ are currently:
\[
	c \sqrt{n/\ln(n)}\;2^{n} \leq m(n) \leq (1+o(1)) \frac{e \ln{2}}{4} n^2 2^n.
\]
Upper bound was obtained by Erd{\H{o}}s in \cite{Er1964} and has not been improved since. 
The most recent improvement on the lower bound has been made by Radhakrishnan and Srinivasan in \cite{RS00}. 
Many results on similar extermal parameters of hypergraph coloring can be found in surveys by Kostochka \cite{Kos2006} and  Raigorodskii and Shabanov \cite{RaiSha2011}.

Erd{\H{o}}s and Lov\'asz in \cite{EL1975} analysed a variant of $m(n)$ denoted by $m^*(n)$ which is the minimum number of edges in a simple $n$-uniform hypergraph that is not two colorable. 
The authors proved asymptotic bounds  $m^*(n) = \Omega(4^n/n^3)$ and $m^*(n)=O(n^4 4^n)$ (this function grows much faster than $m(n)$). 
The lower bound of Erd{\H{o}}s and Lov\'asz was derived from the observation that a hypergraph, which is not two colorable, must contain a vertex with large degree. 
Let us define $D^*(n)$ as the maximum  number such that every simple $n$-uniform hypergraph with maximum vertex degree strictly smaller than $D^*(n)$ is two colorable. 
In \cite{Szabo1990}
Szab\'{o}
proved that for any $\epsilon>0$ and all large enough $n$, any simple $n$-uniform hypergraph with maximum vertex degree smaller than $n^{-\epsilon} 2^n$ is two colorable. 
It can be rephrased as $D^*(n) = n^{-\epsilon(n)} 2^n$ for some $\epsilon(n)= o(1)$ which was not specified explicitly.  
In \cite{Sha2012RSA} Shabanov proved that analogous bound is valid for  $\epsilon(n)=O(\sqrt{\ln\ln(n)/\ln(n)})$.
Using the method of Erd{\H{o}}s and Lov\'asz from \cite{EL1975} with a refinement observed by Kostochka \cite{Kos2006} these results implies the following bound 
\[
	m^*(n) = \Omega( 4^n/n^{2\epsilon(n)}).
\]

The first of our main results is an improvement of the lower bound on $D^*(n)$. 
\begin{theorem}
\label{thm:maxD}
Let $D^{*}(n)$ be the maximum number such that every simple $n$-uniform
hypergraph with maximum vertex degree at most $D^*(n)$ is two colorable. Then
\[
	D^*(n) = \Omega \left( \frac{2^n}{\ln(n)}  \right).
\]
\end{theorem}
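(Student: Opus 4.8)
The plan is to build a proper $2$-colouring of the given hypergraph $H=(V,E)$ by a randomized, multipass greedy recolouring procedure and to prove that it succeeds with positive probability whenever $\Delta(H)\le c\,2^{n}/\ln n$ for a small absolute constant $c>0$. For calibration, note that a single uniformly random $2$-colouring together with the symmetric Local Lemma already yields two-colourability at maximum degree $\Theta(2^{n}/n)$, since an edge is monochromatic with probability $2^{1-n}$ and meets at most $n(\Delta-1)$ other edges; the whole point is to replace the crude factor $1/n$ by $1/\ln n$, and for that the colouring must be allowed to repair itself.

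\textbf{The procedure.} Equip each vertex $v$ with an independent uniform real $t(v)\in[0,1]$ and an independent uniform colour $\chi(v)\in\{\Red,\Blue\}$, and then run several correction passes in the spirit of Radhakrishnan and Srinivasan: in a pass we scan the vertices in increasing order of $t(v)$ and, at the moment we reach $v$, we flip its colour if $v$ is currently the $t$-maximal vertex of an edge that is monochromatic in $\chi(v)$'s colour. A vertex is flipped only when it is about to complete a monochromatic edge, so an edge that is non‑monochromatic and untouched stays non‑monochromatic; the multipass feature is needed because a flip can create a fresh monochromatic edge elsewhere, which a later pass must repair, and one arranges the passes (e.g.\ with a decreasing sequence of thresholds) so that the set of still‑endangered vertices shrinks and the process stabilises at a limiting colouring. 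Phrasing the procedure as greedy — a fixed random order of vertices, only local repairs — is what will later let the failure analysis localise.

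\textbf{The analysis and the obstacle.} For a fixed edge $e$, the event ``$e$ is monochromatic in the final colouring'' forces a recursive certificate: each vertex of $e$ whose final colour differs from its initial one was flipped because of a witness edge $f\neq e$ through it that was itself monochromatic at flip time, which in turn required its own vertices to have been flipped or so initialised, and so on — a rooted branching tree of edges hanging off $e$. Simplicity is exactly what makes such certificates tractable: since any two edges meet in at most one vertex, the ``free'' vertices of the edges in a certificate (those forced to be monochromatic) have pairwise intersections of size at most one, so the probability of a given certificate is essentially a product of independent $2^{-n+O(1)}$ factors, and one sums over all certificate shapes using $\Delta(H)\le D$ to count the witness edges at each branching. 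The delicate part — and the main obstacle — is that a naive such sum only converges for $D\ll 2^{n}/n$; to push it to $D\sim 2^{n}/\ln n$ one must run the passes on a carefully chosen schedule (the analogue of the threshold in Radhakrishnan–Srinivasan), so that an edge which is only \emph{nearly} monochromatic at the critical moment does not trigger a cascade, and then optimise the trade‑off between the per‑layer cost of a recolouring and the number of layers one can afford — it is precisely this optimisation that produces the $\ln n$. Finally, because a certificate for $e$ only reads randomness on edges within bounded certificate‑distance of $e$, the resulting per‑edge failure estimate feeds into a local argument (a lopsided Local Lemma, or equivalently an entropy‑compression accounting of the procedure's random choices) that converts it into a bound on the maximum degree rather than the number of edges, whence $H$ is two colourable and $D^{*}(n)=\Omega(2^{n}/\ln n)$.
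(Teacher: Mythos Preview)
Your outline points in the right direction (random birth times, greedy repairs, Local Lemma), but as written it is not a proof: the two places where the actual mathematics happens are precisely the two places you leave blank.

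First, the certificate structure. You describe the obstruction to success as a ``rooted branching \emph{tree} of edges''; with such tree certificates a union over shapes will not reach $2^{n}/\ln n$. The paper's procedure is designed so that the obstructions are \emph{linear chains}, not trees: initial colours are not random but are determined by the birth-time interval (blue on $B\cup P_B$, red on $R\cup P_R$, with $|P_B|=|P_R|=p/2$), and only the last vertex of a monochromatic edge is ever flipped. The upshot (Proposition~\ref{prop:cc2}) is that failure forces a \emph{complete conflicting chain} $(s_1,\dots,s_k)$ whose linking vertices alternate between $P_B$ and $P_R$; the probability of a disjoint such chain is at most $2(p/2)^{k-1}((1-p/k)/2)^{k(n-2)}$, and with $p=\ln n/n$ the whole sum over $k$ is $O(1/(n\ln n))$. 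This linear, not arboreal, combinatorics is the mechanism that turns $1/n$ into $1/\ln n$; your sketch does not contain it.

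Second, the role of simplicity. You use simplicity only to make certificate probabilities multiplicative, but that is not where the gain is. A conflicting chain need not be disjoint, and for non-disjoint chains the probability estimate above fails. The paper observes that any non-disjoint chain contains an \emph{almost disjoint cycle}, and proves (Proposition~\ref{prop:adisjC}) that in a \emph{simple} hypergraph the number of almost disjoint cycles of length $k$ through a vertex is at most $k(k-1)d(nd)^{k-2}n^{2}$ --- one factor of $d$ fewer than the $dk(nd)^{k-1}$ chains --- because two vertices determine the closing edge. This saving of a factor $d\sim 2^{n}/\ln n$ is exactly what makes the cycle contribution negligible. Without this counting lemma, or an equivalent, there is no way to control non-disjoint certificates, and your ``carefully chosen schedule'' and ``optimise the trade-off'' are placeholders for an argument that has not been supplied.
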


We sketch below how an improved bound on $m^*(n)$ can be derived from the theorem above. We start with an observation by Erd\H{o}s and Lov\'{a}sz from \cite{EL1975}.
%By the observation of Erd\H{o}s and Lov\'{a}sz from \cite{EL1975} it implies a lower bound on the minimum number of edges in a simple $n$ uniform hypergraph which is not two colorable of the order $d^2/n$. 
Let $H$ be a simple $n$-uniform  hypergraph which is not two colorable.
%with maximum vertex degree $d<D*(n)$. 
We remove from every edge a vertex of this edge with maximum degree. 
The hypergraph becomes $(n-1)$-uniform, it is still simple and not two colorable. 
By Theorem \ref{thm:maxD} if $n$ is large enough then it contains a vertex $v$ of degree at least $d= \frac{c}{\ln(n)} 2^n$ for some positive constant $c$. Vertex $v$ has at least the same degree in $H$. 
For every edge $f$ adjacent to $v$, the vertex removed from $f$ has degree at least $d$. 
Since the hypergraph is simple there are at least $d$ such vertices. 
Altogether we have at least $d$ vertices with degrees at least $d$. 
Therefore the number of edges is at least $d^2/n$. Kostochka in \cite{Kos2006} observed that this bound can be improved as follows. 
Let us order these $d$ vertices linearly. 
Then the number of edges containing $i$-th vertex and some previous vertex is at most $i-1$ (because the hypergraph is simple). 
Therefore there are at least $\sum_{i=0}^{d-1} (d-i)= \binom{d+1}{2}$ distinct edges in the hypergraph. 
That gives the following corollary.

\begin{corollary}
\label{cor:m*n}
Let $m^*(n)$ be the minimum number of edges in a simple $n$-uniform hypergraph which is not two colorable. Then,
	\[
		m^*(n)= \Omega\left(\frac{4^n}{\ln(n)^2}\right).
	\]
\end{corollary}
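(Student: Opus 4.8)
The plan is to carry out in detail the Erd\H{o}s--Lov\'asz trimming argument sketched just above, whose entire substance is the degree bound of Theorem \ref{thm:maxD}; what remains is bookkeeping. Start with a simple $n$-uniform hypergraph $H$ on a vertex set $V$ that is not two colorable, and form $H'$ by deleting from each edge $f$ one vertex $u_f\in f$ of maximum $H$-degree. Then $H'$ is $(n-1)$-uniform; it is simple, because $f_1'\cap f_2'\subseteq f_1\cap f_2$ for the corresponding edges; and it is not two colorable, because any proper two coloring of $H'$ is already proper for $H$ (a monochromatic edge of $H$ would contain a monochromatic edge of $H'$). Since $2^{n-1}/\ln(n-1)=\Theta\!\left(2^n/\ln n\right)$, Theorem \ref{thm:maxD} applied to $H'$ yields, for all large $n$, a vertex $v$ with $\deg_{H'}(v)\ge d$ for some $d=\Omega\!\left(2^n/\ln n\right)$; hence $\deg_H(v)\ge d$ as well, since deleting vertices from edges destroys no edge and $v$ itself was not deleted.

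Next I would extract $d$ high-degree vertices from $H$. For each of the $\ge d$ edges $f$ of $H$ through $v$, the deleted vertex $u_f$ satisfies $\deg_H(u_f)\ge\deg_H(v)\ge d$ by the choice of $u_f$, and $u_f\ne v$. Moreover the vertices $u_f$ are pairwise distinct: if $u_{f_1}=u_{f_2}$ for distinct $f_1,f_2$, then $\{v,u_{f_1}\}\subseteq f_1\cap f_2$, contradicting simplicity of $H$. So $H$ contains distinct vertices $u_1,\dots,u_d$, each of $H$-degree at least $d$.

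Finally, apply the counting refinement of Kostochka to this list. Fixing the ordering $u_1,\dots,u_d$, simplicity forces at most $i-1$ edges to contain $u_i$ together with some $u_j$ with $j<i$ (at most one per pair $\{u_i,u_j\}$), so at least $\deg_H(u_i)-(i-1)\ge d-i+1$ edges contain $u_i$ and no $u_j$ with $j<i$. Assigning to every edge the least index of a vertex $u_j$ that it contains shows that these $d$ families of edges are pairwise disjoint, whence $H$ has at least $\sum_{i=1}^{d}(d-i+1)=\binom{d+1}{2}=\Omega(d^2)=\Omega\!\left(4^n/\ln^2 n\right)$ edges, which is the claimed bound.

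I do not expect a genuine obstacle here: Theorem \ref{thm:maxD} supplies the only hard input. The two points that require care are the trimming step --- where one must check that \emph{both} simplicity and the failure of two colorability survive, and that dropping the uniformity by a single unit preserves the $\Omega\!\left(2^n/\ln n\right)$ order --- and the two uses of simplicity in the final count (distinctness of the $u_f$ and disjointness of the per-index edge families), which is precisely what upgrades the crude degree-sum estimate $\Omega\!\left(d^2/n\right)$ to $\Omega(d^2)$ and thereby removes a spurious factor of $n$ from the bound.
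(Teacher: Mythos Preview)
Your proposal is correct and follows exactly the Erd\H{o}s--Lov\'asz trimming argument with Kostochka's refinement that the paper sketches immediately before the corollary; you have simply filled in the routine verifications (simplicity and non-two-colorability of $H'$, distinctness of the $u_f$, disjointness of the edge families) that the paper leaves implicit. The one minor imprecision is the clause ``$u_f\ne v$'': to guarantee this you should restrict to the $\ge d$ edges $f$ of $H$ whose trimmed versions still contain $v$ in $H'$, rather than to all edges of $H$ through $v$, but this is a trivial fix.
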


The main result of the paper \cite{Szabo1990} by Szab\'{o} was a lower bound on van der Waerden numbers $W(n,2)$. 
Number $W(n,2)$ is the smallest integer such that if we partition set $\{1,2, \ldots ,W(n,2)\}$ into two classes, then at least one of the classes contains an arithmetic progression of length $n$. 
Szab\'{o} proved that for every $\epsilon>0$ and all large enough $n$ we have $W(n,2)\geq n^{-\epsilon} 2^n$. 
Our techniques also extends to this case and we show an improved lower bound which is also valid for larger number of colors.

\begin{theorem}
\label{thm:wn2}
\label{thm:Wnr}
Let $W(n,r)$ be the smallest integer such that if we partition the integers $\{1, 2, \ldots, W(n,r)\}$ into $r$ sets then at least one of them contains an arithmetic progression of length $n$. 
Then for any fixed $r$ we have
	\[
		W(n,r) = \Omega\left(\frac{r^n}{\ln(n)}\right).
	\]
\end{theorem}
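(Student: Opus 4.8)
The plan is to realise an $r$-colouring of $\{1,\dots,N\}$ with no monochromatic $n$-term arithmetic progression as a proper $r$-colouring of the hypergraph $H_N$ with vertex set $[N]:=\{1,\dots,N\}$ whose edges are exactly the $n$-term arithmetic progressions contained in $[N]$, and then to apply to $H_N$ the colouring method that proves Theorem~\ref{thm:maxD}. Fix a small constant $\epsi>0$ and put $N=\lfloor \epsi\, r^{n}/\ln n\rfloor$. Since a proper $r$-colouring of $H_N$ is precisely a partition of $[N]$ into $r$ parts none of which contains an $n$-term progression, it suffices to show that $H_N$ is $r$-colourable for a suitable choice of $\epsi$; this then yields $W(n,r)>N=\Omega(r^{n}/\ln n)$.

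First I would bound the maximum degree of $H_N$. An $n$-term arithmetic progression containing a fixed $x\in[N]$ is determined by its common difference $d\ge 1$ together with the index $j\in\{0,\dots,n-1\}$ of $x$ within it, and it can fit inside $[N]$ only when $(n-1)d\le N-1$. Hence $x$ lies in at most $n\,\lfloor (N-1)/(n-1)\rfloor\le 2N$ edges, so the maximum degree of $H_N$ is at most $2\epsi\, r^{n}/\ln n$, which is below the colourability threshold in the $r$-colour analogue of Theorem~\ref{thm:maxD} (the $b=1$ case stated in the abstract) once $\epsi$ is chosen small enough.

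The genuine difficulty is that $H_N$ is far from simple: two $n$-term progressions sharing a common difference may overlap in as many as $n-1$ vertices. What should save the argument is that the colouring method uses simplicity only locally, through a bound on a codegree, and the weak substitute $H_N$ does satisfy is polynomial in $n$. Indeed, for any two distinct $u,v\in[N]$ the number of $n$-term progressions through both is at most $n^{2}$: writing $s=|u-v|$, such a progression has common difference $s/e$ for some divisor $e\le n-1$ of $s$, and for each of the at most $n-1$ admissible values of $e$ there are fewer than $n$ ways to place a length-$n$ progression with $u$ and $v$ at distance $e$ steps. Consequently, for a fixed edge $f\ni v$ the number of edges $g\ni v$ with $|g\cap f|\ge 2$ is at most $(n-1)\cdot n^{2}$: choose a second common vertex $w\in f\cap g$, then one of the $\le n^{2}$ progressions through $\{v,w\}$. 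This $n^{O(1)}$ bound is what one feeds into the proof of Theorem~\ref{thm:maxD} (in its $r$-colour version) in place of simplicity — where the analogous quantity is $0$ — and running the resulting procedure on $H_N$ produces the desired proper $r$-colouring.

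I expect the crux to be precisely this substitution: one must reopen the proof of Theorem~\ref{thm:maxD}, locate every place where simplicity is invoked, and check that replacing ``any two edges meet in $\le 1$ vertex'' by the codegree bound above degrades the failure estimates by at most polynomial-in-$n$ factors. Such factors are harmless here because the degree budget $r^{n}/\ln n$ dwarfs any power of $n$, but verifying that no step secretly needs the full strength of simplicity — for instance in controlling how ``repairing'' one vertex during the process affects an edge through it — is where the work lies. The remaining ingredients (the degree count, the divisor estimate, and the translation back to a lower bound on $W(n,r)$) are routine, and taking $r=2$ recovers the van der Waerden bound implicit in Szab\'{o}'s work, now with the improved $\ln n$ denominator.
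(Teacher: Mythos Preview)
Your high-level plan matches the paper's exactly: work in the hypergraph $H_N$ of $n$-term progressions, run the $r$-colour multipass greedy procedure, and apply the Local Lemma after replacing simplicity by the pair-degree bound $|\{f\in E: u,v\in f\}|\le n^2$. The paper indeed uses precisely this codegree bound to handle almost disjoint cycles $(s_1,\dots,s_k)$ with $|s_1\cap s_k|=1$ (its Proposition~\ref{prop:almost disjoint cycle H(W,n)1}), picking up only a harmless factor of $n^2$.

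The gap is the case $|s_1\cap s_k|\ge 2$, which cannot occur in a simple hypergraph and is therefore invisible if you merely ``reopen'' the proof of Theorem~\ref{thm:maxD}. Here your codegree bound alone loses a full factor of $d$, not a polynomial in $n$: if $v$ lies in a middle edge of the cycle, you know one vertex $v_1$ of $s_1$ and one vertex $v_{k-1}$ of $s_k$, together with the fact that $s_1$ and $s_k$ share two unknown points; from this the codegree bound lets you fix $s_k$ once $s_1$ is chosen, but you still spend $d$ choices on $s_1$. The resulting count is $d^{k-1}n^{O(1)}$ rather than the $d^{k-2}n^{O(1)}$ the Local Lemma needs, and since $d\asymp r^n/\ln n$ the contribution to the local polynomial is not $o(1/n)$ but polynomially large. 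The paper closes this gap (Proposition~\ref{prop:almost disjoint cycle H(W,n)2}) by exploiting genuine arithmetic rigidity that goes beyond any codegree statement: if two progressions share two points then their common differences are commensurate, and the single linear relation $v_1+l_1\gamma_1=v_{k-1}+l_k\gamma_k$ coming from the shared points then pins down both $\gamma_1$ and $\gamma_k$ up to polynomially many choices. So the step that ``secretly needs the full strength of simplicity'' is precisely this one, and the repair is not a polynomial-loss substitution but a separate counting argument specific to arithmetic progressions.
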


Until recently the best lower bound on $W(n,2)$ valid for all large enough $n$ was the bound by Szab\'{o}. 
For prime $p$ it was proved by Berlekamp in \cite{Berlekamp1968} that $W(p+1,2)\geq p 2^p$. 
A lower bound stronger than ours, also valid for $r$ coloring, $W(n,r) = \Omega\left(\frac{r^n \ln\ln(n)}{\ln(n)}\right)$ has been recently announced in \cite{ShaVdW13}.
The best upper bound 
$W(n,2)\leq 2 \uparrow 2 \uparrow 2 \uparrow 2 \uparrow 2 \uparrow (n+9)$ has been proved by Gowers in \cite{Gowers2001}.

Hypergraph is called \emph{$b$-simple} if intersection of every two different edges contains at most $b$ elements. 
As a consequence in any $b$-simple hypergraph every set of vertices of size $b+1$ is contained in at most one edge. 
Kostochka and Kumbhat \cite{KosKum2009} define $f(n,r,b)$ as the smallest possible number of edges in an $n$-uniform $b$-simple hypergraph that is not $r$-colorable. 
For every positive $\epsilon$ and fixed $b$ and $r$ they proved that $f(n,r,b) = \Omega(\frac{r^{n(1+1/b)}}{n^\epsilon})$. 
The main ingredient of their proof was a result on maximum edge-degree of $n$-uniform $b$-simple hypergraphs that is not $t$-colorable. 
In order to improve their lower bound for $f(n,r,b)$ we derive a new bound on the maximum vertex degree in such graphs.

\begin{theorem}
\label{thm:Dbnr}
	Let $D^b(n,r)$ be the maximum number such that every $b$-simple $n$-uniform hypergraph with maximum vertex degree smaller than $D^b(n,r)$ is $r$-colorable. Then for every fixed $r \geq 2$ and $b$ we have
	\[
		D^b(n,r)= \Omega\left(\frac{r^n}{\ln(n)} \right).
	\]
\end{theorem}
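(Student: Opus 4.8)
The plan is to adapt the multipass greedy coloring argument that underlies Theorem~\ref{thm:maxD} to the setting of $r$ colors and $b$-simple hypergraphs. First I would fix a $b$-simple $n$-uniform hypergraph $H$ with maximum vertex degree $D \le c\, r^n/\ln(n)$ and describe a randomized procedure $\MGCr$ that attempts to produce a proper $r$-coloring in several passes. In the first pass, each vertex independently draws a uniform real from $[0,1]$, which partitions the vertices into $r$ ``color bands''; a vertex is \emph{tentatively} assigned the color of its band. An edge is \emph{endangered in pass $1$} if it is monochromatic under these tentative colors; the vertices lying in endangered edges become \emph{uncolored} and are passed on to the next round, where the process repeats on the sub-hypergraph they induce, now using fresh random reals to split into $r$ new bands, and so on for a bounded number $t = O(\ln n)$ of passes. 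At the end, any vertex still uncolored (or belonging to an edge that was monochromatic at the last pass) is colored arbitrarily; we must show that with positive probability no edge of $H$ is monochromatic in the final coloring.

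The analysis proceeds by bounding, for each edge $e$, the probability of the ``bad'' event that $e$ is monochromatic after the final pass. An edge $e$ fails only if at every pass either $e$ lies entirely in one color band (probability $r \cdot r^{-n}$ per pass on the full edge), or enough of $e$'s vertices were already uncolored coming into that pass that the surviving part of $e$ ends up monochromatic. The standard device (following Radhakrishnan--Srinivasan and Szab\'o, as adapted by the ``uncoloring'' step) is to charge the event that a vertex $v\in e$ was uncolored in pass $j$ to a \emph{certificate}: a second edge $f$ through $v$ that was monochromatic in pass $j$. Because $H$ is $b$-simple, two edges meet in at most $b$ vertices, so a single certificate edge $f$ can ``explain'' at most $b$ uncolored vertices of $e$; hence we need roughly $(n-b)/b$ certificate edges to kill a whole edge $e$ across the passes, and there are at most $D$ choices for each certificate edge through a given vertex. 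Combining: the failure probability of $e$ is at most a sum over pass-histories and certificate-edge choices of terms of the shape $\binom{n}{\cdot}\, D^{O(n/b)} \, r^{-\Theta(n)\cdot(\text{something})}$, and we want this to be $o(r^{-n}\cdot n^{-2})$ or so, small enough that a union bound over the (at most) $|E(H)| \le D \cdot n^{b}$-ish edges — or better, a Lov\'asz-Local-Lemma argument, since the bad event for $e$ depends only on edges within bounded ``distance'' — yields a valid coloring with positive probability. The choice $t = \Theta(\ln n)$ passes and $D = \Theta(r^n/\ln n)$ is exactly what makes the per-edge failure probability beat the local-lemma threshold; the $\ln n$ factor in the denominator of $D^b(n,r)$ is the price of needing $\Theta(\ln n)$ passes, each shaving the endangered set down by a constant factor.

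The main obstacle I anticipate is controlling the dependence on $b$ cleanly in the multipass bound, i.e. showing that the $b$-simplicity really does let a bounded number of certificate edges account for all the uncolored vertices of a given edge simultaneously across all passes, without the combinatorial overhead (the number of ways to choose which vertices of $e$ are explained by which certificate, over which passes) swamping the $r^{-\Theta(n)}$ savings. Concretely, one must verify that $\sum \binom{n}{k}\,(\text{ways to distribute } k \text{ uncolored vertices among } \le k/b \text{ certificate edges over } t \text{ passes}) \cdot D^{k/b} \cdot r^{-(n-k)}$ stays $o(r^{-n})$ for the chosen $D$ and $t$; this requires the band-splitting probabilities to decay fast enough that the ``few vertices uncolored'' regime dominates, and the bookkeeping is where Theorem~\ref{thm:maxD}'s proof (the $b=1$, $r=2$ case) must be generalized with care. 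Once that estimate is in place, the rest — setting up the dependency graph for the Local Lemma, checking its degree is polynomial in $n$ and in $D$, and verifying the $\mathrm{e}\,p\,(d+1)\le 1$ condition — is routine, and the stated bound $D^b(n,r) = \Omega(r^n/\ln n)$ follows. As a byproduct the same machinery applied to arithmetic progressions (where every two $n$-term progressions share at most a bounded number of points after a standard reduction, playing the role of $b$-simplicity) gives Theorem~\ref{thm:Wnr}.
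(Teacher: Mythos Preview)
Your proposal describes a genuinely different procedure from the paper's $\MGCr$, and as sketched it does not close. In the paper, ``multipass'' does not mean several rounds of fresh random coloring; there is a \emph{single} random birth-time assignment $t:V\to[0,1)$, the unit circle is split into large ``permanent'' arcs $C_0,\ldots,C_{r-1}$ of total length $1-p$ and tiny ``swap'' arcs $P_0,\ldots,P_{r-1}$ of total length $p=\ln(n)/n$, and only the vertices landing in the $P$-arcs may ever be recolored, greedily and deterministically, as the last vertex of a currently monochromatic edge. The crucial structural fact (Proposition~\ref{prop:ccr}) is that if this procedure fails then there exists a \emph{complete $r$-conflicting chain}: a sequence of edges $(s_1,\ldots,s_k)$ linked by single vertices $v_i$ in the $P$-arcs, with $v_i$ the first vertex of $s_i$ and the last of $s_{i+1}$. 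The bad events for the Local Lemma (in the polynomial form of Lemma~\ref{lm:local}) are therefore indexed by chains, not by single edges; they are split into $b$-disjoint chains of length $<K$, chains of length $<K$ that are not $b$-disjoint (counted via Proposition~\ref{prop:nbdisjC}), and chains of length exactly $K=\lfloor\ln n\rfloor$, for which it suffices that they not be $r$-conflicting at all.

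The concrete gap in your sketch is the certificate count. You argue that a certificate edge $f$ explains at most $b$ uncolored vertices of $e$, ``hence we need roughly $(n-b)/b$ certificate edges to kill a whole edge $e$''; but then the number of choices is of order $D^{\,n/b}\approx (r^n/\ln n)^{n/b}$, which swamps any factor $r^{-\Theta(n)}$ you can save, and the displayed sum does not converge for the claimed value of $D$. This is exactly why Szab\'o's and Kostochka--Kumbhat's recoloring schemes, which are closer in spirit to what you describe, yielded only $D^b(n,r)\ge r^n/n^{\epsilon(n)}$ with $\epsilon(n)\to 0$ slowly. The improvement to $r^n/\ln n$ comes from a different mechanism: a conflicting chain of length $k$ carries an extra probability factor $(p/r)^{k-1}$ because the $k-1$ link vertices must all fall in the $P$-arcs, while the number of chains of length $k$ through a vertex is only $dk(nd)^{k-1}$; the quantity that must be pushed below $1$ is therefore $dpn/r^{n-b}$, and $p=\ln(n)/n$ gives the $\ln n$ directly. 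Your proposal does not contain this mechanism, and the ``main obstacle'' you flag is in fact fatal for the scheme as stated rather than a detail to be tidied up.
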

We use simple trimming arguments, close to developments of Kostochka and Kumbhat from \cite{KosKum2009}, to derive from the theorem above the following corollary.
\begin{corollary}
\label{cor:fnrb}
Let  $f(n,r,b)$ be the smallest possible number of edges in an $n$-uniform $b$-simple hypergraph that is not $r$-colorable. Then for any fixed $b$ and $r$ we have
\[
	f(n,r,b) = \Omega\left( \left(\frac{r^n}{\ln(n)} \right)^{b+1/b} \right).
\]	
\end{corollary}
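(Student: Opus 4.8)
The plan is to feed the vertex-degree bound of Theorem~\ref{thm:Dbnr} into a trimming argument of Erd\H{o}s--Lov\'asz / Kostochka--Kumbhat type and to finish with a double counting. Fix $r$ and $b$, let $H$ be an $n$-uniform $b$-simple hypergraph that is not $r$-colorable and has $m=f(n,r,b)$ edges, and put $D:=D^{b}(n-b,r)$, so that Theorem~\ref{thm:Dbnr} (with $r,b$ fixed) gives $D=\Omega(r^{n}/\ln n)$; it therefore suffices to show $m=\Omega(D^{1+1/b})$.

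First I would iterate $b$ times, starting from $H_{0}=H$, the pass ``delete from every edge a vertex of maximum degree''. Because $H$ is $b$-simple, along these passes no two edges can coincide or nest (either would create an intersection of size $>b$ in $H$), so every pass merely shrinks each edge by one vertex, preserves $b$-simplicity, and preserves non-$r$-colorability (a proper coloring of $H_{i+1}$ is proper for $H_{i}$, since enlarging an edge cannot make it monochromatic); moreover vertex degrees are nonincreasing along the passes. Hence $H_{b}$ is $(n-b)$-uniform, $b$-simple and not $r$-colorable, and Theorem~\ref{thm:Dbnr} supplies a vertex $v$ with $\deg_{H_{b}}(v)\ge D$. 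Each edge $e$ of $H$ carries a chain $e=e^{(0)}\supseteq\cdots\supseteq e^{(b)}$ recording the passes; set $\mathcal F=\{e\in H:v\in e^{(b)}\}$ (so $|\mathcal F|=\deg_{H_{b}}(v)\ge D$), $R(e):=e\setminus e^{(b)}$ (a $b$-element set), and $W:=\bigcup_{e\in\mathcal F}R(e)$. Two observations: every vertex of $W$ has $H$-degree at least $D$, since a vertex deleted from $e^{(i)}$ had maximum $H_{i}$-degree among the vertices of $e^{(i)}$---which still contains $v$---while degrees only decrease; and the sets $R(e)$ are pairwise distinct, for $R(e)=R(e')$ would put the $(b+1)$-element set $R(e)\cup\{v\}$ inside $e\cap e'$, which is impossible in a $b$-simple hypergraph. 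Therefore $\binom{|W|}{b}\ge|\mathcal F|\ge D$, whence $|W|\ge(b!\,D)^{1/b}\ge D^{1/b}$.

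The counting step is where the single new idea enters. For an edge $f$ let $x_{f}:=|f\cap W|$. Counting vertex--edge incidences, $\sum_{f}x_{f}=\sum_{w\in W}\deg_{H}(w)\ge|W|\,D\ge D^{1+1/b}$. Counting the $(b+1)$-element subsets of $W$ lying inside edges, $\sum_{f}\binom{x_{f}}{b+1}=\sum_{T\subseteq W,\;|T|=b+1}d_{H}(T)\le\binom{|W|}{b+1}$, because $b$-simplicity forces $d_{H}(T)\le1$ for every $(b+1)$-element $T$. Discarding the edges with $x_{f}\le b$ (they contribute at most $bm$ to $\sum x_{f}$) and using $x_{f}^{\,b+1}\le C_{b}\binom{x_{f}}{b+1}$ for $x_{f}\ge b+1$, H\"older's inequality turns these two estimates into $|W|\,D\le bm+C_{b}'\,|W|\,m^{\,b/(b+1)}$ with constants $C_{b},C_{b}'$ depending only on $b$. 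If the first term on the right is the larger, then $m\ge|W|\,D/(2b)=\Omega(D^{1+1/b})$ (here $|W|\ge D^{1/b}$ is used); if the second, then $D\le2C_{b}'\,m^{\,b/(b+1)}$, so again $m=\Omega(D^{1+1/b})$. Since $D=\Omega(r^{n}/\ln n)$ this yields $f(n,r,b)=m=\Omega\bigl((r^{n}/\ln n)^{1+1/b}\bigr)$, the bound claimed in the corollary, and for $b=1$ it reproduces Corollary~\ref{cor:m*n} without even the ordering refinement.

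The trimming and the extraction of $W$ are routine variants of the classical $m^{*}$ argument, so the crux---and the source of the improvement from the $n^{\epsilon}$ loss of \cite{KosKum2009} to a mere power of $\ln n$---is the pairing carried out in the third paragraph: one must play the incidence lower bound $\sum_{f}x_{f}\gtrsim|W|\,D$ against the ``$(b+1)$-uniform simplicity'' upper bound $\sum_{f}\binom{x_{f}}{b+1}\le\binom{|W|}{b+1}$ so that H\"older's inequality alone extracts the exponent $1+1/b$. The only point needing care is to verify that the discarded ``small-$x_{f}$'' contribution $bm$ is either negligible against $|W|\,D$ or already larger than the target $\Omega(D^{1+1/b})$, which is exactly the two-case analysis above.
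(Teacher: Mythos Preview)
Your argument is correct, and through the extraction of the high-degree set $W$ it coincides with the paper's proof almost verbatim: trim $b$ times, apply Theorem~\ref{thm:Dbnr} to the resulting $(n-b)$-uniform $b$-simple hypergraph to find a vertex $v$ of degree $\ge D$, and observe that the $b$-sets removed from the edges through $v$ are pairwise distinct (else $b$-simplicity fails) and consist of vertices of $H$-degree $\ge D$, giving $|W|\ge D^{1/b}$.

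The only genuine difference is the final counting. The paper orders the vertices of $W$ as $v_1,\dots,v_m$ (taking $m=D^{1/b}$ of them), sets $d_j$ to be the number of edges through $v_j$ that meet $\{v_1,\dots,v_{j-1}\}$ in at most $b-1$ points, notes $d_j\ge D-\binom{j-1}{b}$, and observes that every edge is counted at most $b$ times in $\sum_j d_j$; this yields $|E|\ge \tfrac{1}{b}\bigl(1-\tfrac{1}{(b+1)!}\bigr)D^{1+1/b}$ directly, with no case analysis and no inequality beyond $\sum_{j<m}\binom{j}{b}=\binom{m}{b+1}$. Your route instead works with the profile $x_f=|f\cap W|$, pairing the incidence lower bound $\sum_f x_f\ge |W|\,D$ against the $b$-simplicity upper bound $\sum_f\binom{x_f}{b+1}\le\binom{|W|}{b+1}$ via H\"older. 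Both are short and give the same exponent; the paper's ordering argument is slightly more elementary (and gives an explicit constant), while your formulation makes transparent that the exponent $1+1/b$ is exactly what the tension between a linear lower moment and a $(b+1)$-st upper moment forces.
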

Let $m^*(n,r)$ the minimum number of edges in a simple hypergraph that is not $r$-colorable (i.e. $m^*(n,r)= f(n,r,1)$). 
Corollary above implies that for a fixed $r$ we have $m^*(n,r) = \Omega(r^{2n} / \ln(n)^2)$. It improves previous bound $m^*(n,r) \geq r^{2n} / n^{\epsilon(n)}$ for some $\epsilon(n)=O(\sqrt{\ln\ln(n)/\ln(n)})$, by Shabanov \cite{Sha2012RSA}.

Our proofs follow the strategy used already by Szab\'{o} in \cite{Szabo1990} -- 
for a fixed coloring procedure we analyse some specific configurations that make the procedure fail and use a variant of Local Lemma to prove that these configurations can be simultaneously avoided.
The main difference between our developments and those of Szab\'{o}  \cite{Szabo1990} and Kostochka and Kumbhat \cite{KosKum2009} is that we use different coloring procedure. 
Our procedure, called multipass greedy coloring ($\MGC$), is an extension of very natural greedy coloring first applied to hypergraph coloring problems by Pluh{\'a}r in \cite{Pluhar09}. 
Our extension and its analysis is close to the recent developments  by Cherkashin and Kozik \cite{CK2013} who used that procedure %analysed the performance of random greedy coloring procedure on uniform hypergraphs.
to improve lower bound on $m(n,r)$ -- the minimum number of edges in an $n$-uniform hypergraph that is not $r$-colorable. 
It turns out that obstacles that make $\MGC$ procedure fail are simpler than configurations considered in \cite{Szabo1990} and \cite{KosKum2009} which allows to derive better bounds.

\subsection{Organization of the paper and notation}
 Second section of the paper introduces special version of Local Lemma, which is used in the proofs of the main results. 
In the third section we define and discuss Multipass Greedy Coloring ($\MGC$) procedure that tries to color a hypergraph properly with  two colors. 
The procedure is parametrized by some function called birth time assignment.
In the fourth section we prove that for simple uniform hypergraph with suitable bound on maximum vertex degree it is possible to find a birth time assignment function which given on the input of $\MGC$ procedure produces a proper two coloring.
In the fifth section we generalize $\MGC$ procedure to the case of larger number of colors.
In Section \ref{sec:vdwr} we consider hypergraphs of arithmetic progressions and derive a lower bound on van der Waerden numbers $W(n,r)$. 
Section \ref{sec:bsimp} is devoted to the bounds on maximum vertex degree and number of edges in $n$-uniform $b$-simple hypergraphs that are not $r$-colorable.

Within the paper we assume that all vertices, edges etc. are linearly ordered in some fixed way (default ordering). 
The default ordering induces orderings on subsets of the vertices and edges (default induced orderings). 
Considered sets are always finite. 
An index of an element within a set is its position in the default induced ordering of the set. 
In this convention we say about index of a vertex within an edge, index of some edge among the edges containing specific vertex etc. 
We use $[m]$ to denote the set $\{1,\ldots,m\}$. 
For a function $b$ and subset of its domain $f$ let $b[f]$ denote the image of $f$ under $b$ (i.e.  $b[f]=\{y: \exists_{x\in f} y=b(x)\}$). 
%For a formal power series $F(z)$ of formal variable $z$ we denote by $[z^n]F(z)$ the coefficient of monomial $z^n$ within $F(z)$. 

\section{Local Lemma}
The proof of the lower bound on $m^*(n)$ from \cite{EL1975} contains the first application of the famous  Lov\'asz Local Lemma. 
We quote below its general version  as presented in \cite{AS08}.

\begin{lemma}
\label{lm:localG}
	Let $A_1, \ldots, A_n$ be events in arbitrary probability space. 
A directed graph $D=(V,E)$ on the set of vertices $V={1,\ldots, n}$ is called a dependency digraph for events $A_1, \ldots, A_n$ if for each $i$, $1\leq i \leq n$, the event $A_i$ is mutually independent of all the events $\{A_j: (i,j)\notin E\}$. 
Suppose that $D=(V,E)$ is a dependency digraph for the above events and suppose that there are real numbers $x_1, \ldots, x_n$ such that $0\leq x_i <1$ and $\prob(A_i)\leq x_i \prod_{j:(i,j)\in E}(1-x_j)$ for all $1\leq i\leq n$. 
Then
	\[
		\prob\left( \bigwedge_{i=1}^n  \overline{A_i}\right) \geq \prod_{i=1}^n (1-x_i).
	\]
In particular, with positive probability no event $A_i$ holds.
\end{lemma}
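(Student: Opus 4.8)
The plan is to prove, by induction on the size of the conditioning set, the stronger statement that for every index $i$ and every subset $S \subseteq \{1,\ldots,n\} \setminus \{i\}$ one has
\[
	\prob\left(A_i \mid \bigwedge_{j\in S} \overline{A_j}\right) \leq x_i ,
\]
with the understanding that the very same induction simultaneously shows $\prob\bigl(\bigwedge_{j\in S}\overline{A_j}\bigr) > 0$, so that all the conditional probabilities written below are well defined. The base case $S = \emptyset$ is immediate, since $\prob(A_i) \leq x_i \prod_{j:(i,j)\in E}(1-x_j) \leq x_i$.

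For the inductive step I would split $S$ along the dependency digraph into $S_1 = \{j \in S : (i,j) \in E\}$ and $S_2 = S \setminus S_1$, and express the target quantity as a ratio
\[
	\prob\left(A_i \mid \bigwedge_{j\in S}\overline{A_j}\right) = \frac{\prob\left(A_i \wedge \bigwedge_{j\in S_1}\overline{A_j} \mid \bigwedge_{l\in S_2}\overline{A_l}\right)}{\prob\left(\bigwedge_{j\in S_1}\overline{A_j} \mid \bigwedge_{l\in S_2}\overline{A_l}\right)} .
\]
The numerator is bounded above by $\prob\bigl(A_i \mid \bigwedge_{l\in S_2}\overline{A_l}\bigr)$, which equals $\prob(A_i)$ because $A_i$ is mutually independent of $\{A_l : l \in S_2\}$ (no arc $(i,l)$ is present), and hence is at most $x_i \prod_{j:(i,j)\in E}(1-x_j)$. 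For the denominator, if $S_1 = \emptyset$ it equals $1$ and we are done; otherwise, writing $S_1 = \{j_1,\ldots,j_k\}$ and applying the chain rule,
\[
	\prob\left(\bigwedge_{j\in S_1}\overline{A_j} \mid \bigwedge_{l\in S_2}\overline{A_l}\right) = \prod_{t=1}^k \prob\left(\overline{A_{j_t}} \mid \bigwedge_{s<t}\overline{A_{j_s}} \wedge \bigwedge_{l\in S_2}\overline{A_l}\right),
\]
and each factor is at least $1 - x_{j_t}$ by the induction hypothesis, whose conditioning set here has size strictly smaller than $|S|$. Thus the denominator is at least $\prod_{j\in S_1}(1-x_j) \geq \prod_{j:(i,j)\in E}(1-x_j)$, and dividing yields the bound $x_i$, which completes the induction.

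The lemma then follows from one last application of the chain rule together with the claim just proved:
\[
	\prob\left(\bigwedge_{i=1}^n \overline{A_i}\right) = \prod_{i=1}^n \prob\left(\overline{A_i} \mid \bigwedge_{j<i}\overline{A_j}\right) \geq \prod_{i=1}^n (1 - x_i) > 0 .
\]
The only real difficulty is bookkeeping: one must check that every conditioning event encountered has positive probability — which is exactly what the inductive bound, combined with $x_i < 1$, guarantees — and must be careful that in the denominator the conditioning sets are strictly smaller than $S$, so that the induction hypothesis legitimately applies; the degenerate case $S_1 = \emptyset$ deserves a separate line.
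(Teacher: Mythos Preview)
Your argument is the standard inductive proof of the general Lov\'asz Local Lemma and is correct; the bookkeeping remarks at the end (positivity of conditioning events, strict decrease of $|S|$, the $S_1=\emptyset$ case) are exactly the points that need care, and you have addressed them. Note, however, that the paper does not actually prove this lemma: it is merely quoted from \cite{AS08}, so there is no ``paper's own proof'' to compare against---your write-up is essentially the argument one finds in that reference.
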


Szab\'{o} in \cite{Szabo1990} used a specific variant of Local Lemma derived from the general version by Beck in \cite{Beck80}. 
We use the following generalization of the  Beck's variant. 
The derivation from the general lemma follows exactly the proof presented in \cite{Beck80}. 
We quote this derivation for the sake of completeness.

\begin{lemma}
\label{lm:local}
	Let $\vS=(X_1, \ldots, X_m)$ be independent random variables in arbitrary probability space and let $\evS$ be a finite set of events determined by these variables. 
For $A\in \evS$ let $\vars (A)$  be the minimum set of variables that determines $A$. 
For $X\in \vS$ we define formal polynomial $w_X(z)$ in the following way:
\[
	w_X(z)= \sum_{A\in \evS: X\in \vars(A)} \prob(A) \; z^{|\vars(A)|}.
\]
Suppose that a polynomial $w(z)$ dominates all polynomials $w_X(z)$ for $X\in \vS$ i.e. for every  real $z_0 \geq 1$ we have $w(z_0) \geq w_x(z_0)$.
%for every $n\in \Nat$ and $X\in \vS$ we have $[z^n]w(z) \geq [z^n] w_X(z)$. 
If there exists $\tau_0\in (0,1)$ such that 
\[
	w(\frac{1}{1-\tau_0}) \leq \tau_0,
\]
then all events from $\evS$ can be simultaneously avoided.
\end{lemma}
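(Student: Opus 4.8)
The plan is to deduce the statement from the general Lov\'asz Local Lemma (Lemma~\ref{lm:localG}), following Beck~\cite{Beck80}. First I would set up the dependency digraph $D$ on vertex set $\evS$: put an arc $(A,B)$ precisely when $A\neq B$ and $\vars(A)\cap\vars(B)\neq\emptyset$. Because every event in $\evS$ is determined by the independent variables $X_1,\dots,X_m$, the standard mutual independence principle shows that $A$ is mutually independent of the family $\{B\in\evS:\vars(B)\cap\vars(A)=\emptyset\}$, so $D$ is indeed a dependency digraph for the events of $\evS$.

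Next I would choose the weights. Set $\mu=\frac{1}{1-\tau_0}$, which is $>1$ since $\tau_0\in(0,1)$, and for each $A\in\evS$ put $x_A=\prob(A)\,\mu^{|\vars(A)|}$. The pivotal observation is that these weights group nicely over a single variable: for every $X\in\vS$,
\[
	\sum_{B\in\evS:\,X\in\vars(B)} x_B \;=\; \sum_{B\in\evS:\,X\in\vars(B)} \prob(B)\,\mu^{|\vars(B)|} \;=\; w_X(\mu) \;\leq\; w(\mu) \;=\; w\!\left(\tfrac{1}{1-\tau_0}\right) \;\leq\; \tau_0,
\]
using that $\mu\geq 1$, so the domination hypothesis applies, together with the hypothesis on $\tau_0$. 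In particular $0\leq x_A\leq\tau_0<1$ for every $A$, so the $x_A$ are admissible weights for Lemma~\ref{lm:localG}.

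It then remains to verify the inequality $\prob(A)\leq x_A\prod_{B:(A,B)\in E}(1-x_B)$. The out-neighbours of $A$ are exactly the $B\neq A$ sharing a variable with $A$; since each factor satisfies $1-x_B\leq 1$, regrouping (with possible repetition of a $B$ that meets $A$ in several variables) gives
\[
	\prod_{B:(A,B)\in E}(1-x_B) \;\geq\; \prod_{X\in\vars(A)}\ \prod_{B\in\evS:\,X\in\vars(B)}(1-x_B).
\]
For each inner product the Weierstrass inequality $\prod_i(1-a_i)\geq 1-\sum_i a_i$ combined with the bound above yields $\prod_{B:\,X\in\vars(B)}(1-x_B)\geq 1-\tau_0=\mu^{-1}$; hence the displayed product is at least $\mu^{-|\vars(A)|}$, and multiplying by $x_A=\prob(A)\mu^{|\vars(A)|}$ gives back exactly $\prob(A)$. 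Lemma~\ref{lm:localG} then gives $\prob\big(\bigwedge_{A\in\evS}\overline{A}\big)\geq\prod_{A\in\evS}(1-x_A)>0$ (the product is positive since $\evS$ is finite and every $x_A<1$), so all events of $\evS$ can be avoided simultaneously.

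The computation is essentially mechanical and I do not expect a genuine obstacle. The one point to watch is the regrouping step: an event $B$ that shares more than one variable with $A$ is counted several times on the right-hand side, which is harmless since it only decreases the bound, so the inequality still points the right way. The remaining bookkeeping — checking $x_A<1$, discarding degenerate events (those with $\prob(A)=0$ are irrelevant, and a nontrivial event necessarily has $\vars(A)\neq\emptyset$), and recalling the mutual independence principle — is routine. All the content sits in the choice $x_A=\prob(A)\,\mu^{|\vars(A)|}$, which is exactly what makes the per-variable sums collapse into the polynomials $w_X$.
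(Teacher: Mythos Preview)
Your proof is correct and follows essentially the same route as the paper: the same dependency graph, the same choice $x_A=\prob(A)(1-\tau_0)^{-|\vars(A)|}$, the same regrouping over the variables of $A$, and the same use of $\prod(1-x_j)\geq 1-\sum x_j$ followed by the bound $w_X(\mu)\leq w(\mu)\leq\tau_0$. Your write-up is somewhat more careful about the side conditions (admissibility of the $x_A$, the direction of the regrouping inequality), but the argument is identical in substance.
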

\begin{proof}
	We are going to apply Lemma \ref{lm:localG}. 
Let us choose a graph $G=(V,E)$ such that $V=\evS$ and $(A_i,A_j)\in E$ if $\vars(A_i)\cap \vars(A_j) \neq \emptyset$. 
Clearly $G$  is a dependency graph. We choose $x_i=(1-\tau_0)^{-|\vars(A_i)|} \prob(A_i)$. Then
%\begin{align*}
%	(1-x_i)\prod_{j: \vars(A_i)\cap \vars(A_j) \neq \emptyset} x_j \geq 	
%	(1-x_i)\prod_{X \in \vars(A_i)} \prod_{j: X \in \vars(A_j)} x_j \geq \\
%	(1-x_i)\prod_{X \in \vars(A_i)} \left(1-\sum_{j: X \in \vars(A_j)} (1-x_j) \right) \geq
%	(1-x_i)\prod_{X \in \vars(A_i)} \left(1-w_X(\frac{1}{1-\tau_0}) \right) \geq\\
%	(1-x_i) \left(1-w(\frac{1}{1-\tau_0}) \right)^{|\vars(A_i)|}	 \geq
%	(1-x_i) (1-\tau_0)^{|\vars(A_i)|} = \prob(A_i)
%\end{align*}
%without align
\[
	x_i \prod_{j: (A_i,A_j)\in E} (1-x_j) \geq 	
	x_i\prod_{X \in \vars(A_i)} \prod_{j: X \in \vars(A_j)} (1-x_j)
\]
\[
	\geq 	x_i \prod_{X \in \vars(A_i)} \left(1-\sum_{j: X \in \vars(A_j)} x_j \right) \geq
	x_i\prod_{X \in \vars(A_i)} \left(1-w_X(\frac{1}{1-\tau_0}) \right)
\]
\[ 
	\geq
	x_i \left(1-w(\frac{1}{1-\tau_0}) \right)^{|\vars(A_i)|}	 \geq
	x_i (1-\tau_0)^{|\vars(A_i)|} = \prob(A_i).
\]	
It shows that assumptions of Lemma \ref{lm:localG} are satisfied so all events form $\evS$ can be simultaneously avoided.
\end{proof}

We say that $w_X(z)$ is a \emph{local polynomial for variable $X$} and $w(z)$ is \emph{local polynomial}. 
Usefulness of this form of Local Lemma lies in possibility of treating families of events independently. 
The proof of our main results are based on the following strategy used already in \cite{Szabo1990}. We work with $n$-uniform hypergraphs. 
After defining some set of bad events we partition it into finite number of types $\evS_1, \ldots, \evS_k$. 
Then, for each $i=1, \ldots, k$ and each variable $X\in \vS$ we define polynomial $w_X^{\evS_i}(z)= \sum_{A\in \evS_i: X\in \vars(A)} \prob(A) \; z^{|\vars(A)|}$ and choose some polynomial $w^{\evS_i}(z)$ that dominates all polynomials $w_X^{\evS_i}(z)$ for $X\in \vS$. 
Finally we always pick $\tau_0=1/n$. 
For notational convenience  we put $z_0= \frac{1}{1-\tau_0(n)}$. 
Then we show that $n\cdot w^{\evS_i}(z_0)$ is arbitrarily small for large enough $n$. 
We can choose $w(z)= \sum_{i=1}^k w^{\evS_i}(z)$ and then also $n \cdot w(z_0(n))$  is arbitrarily small for large $n$. 
Hence for all large enough $n$, the assumptions of the lemma are satisfied so all the bad event can be simultaneously avoided. 

Sometimes when we consider contribution to local polynomial $w_X(z)$ of some event $A$ dependent on $X$, it is not clear what is the exact cardinality of $\vars(A)$. 
That cardinality corresponds to the exponent of monomial $\prob(A) z^{|\vars(A)|}$ corresponding to that event. 
However, if we are looking for a polynomial that dominates $w_X(z)$ and  we know that $|\vars(A)| \leq K$, we can  instead use monomial $\prob(A) z^K$.

\section{Multipass greedy coloring}
\label{sec:MGC}
We present in this section a multipass greedy coloring procedure ($\MGC$) and derive some conditions which are sufficient for the algorithm to produce a proper two coloring. 
%$\MGC$ procedure is an extension of very natural greedy coloring procedure that has been used by Cherkashin and Kozik in \cite{CK2013} to analyse $m(n,r)$ -- the minimum number of edges in an $n$-uniform hypergraph that is not $r$-colorable. 

Let $(V, E)$ be a hypergraph. 
We fix some parameter $p \in (0, 1/2)$ and partition a unit interval $[0,1)$ into four subintervals
\[
	B=[0,\frac{1-p}{2}), \;\;\; P_B=[\frac{1-p}{2},\frac{1}{2}), \;\;\; 
	R=[\frac{1}{2}, 1- \frac{p}{2}), \;\;\; P_R = [1- \frac{p}{2},1).
\]
The input to the algorithm is a function $t$ which assigns numbers from $[0, 1)$ to the vertices of the hypergraph. 
It is convenient to consider points of the interval as points on a circle of unit circumference. 
Then intervals $B, R, P_B , P_R$ correspond to arcs of the circle. 
Any pair of points $(x, y) \in [0, 1)^2$ determines an arc of the circle $C(x, y)$ that starts at $x$ and continues clockwise along the circle until it hits $y$. 
\emph{Clockwise distance} between $x$ and $y$ is the length of $C(x, y)$.

For a vertex $v \in V$ the value $t(v)$ is called a \emph{birth time} of $v$. 
An edge $f \in E$ is \emph{degenerate} if $t[f ] \subset P_B \cup P_R$, it is \emph{easy} if $t[f] \cap B \neq \emptyset$ and $t[f] \cap R \neq \emptyset$. 
Any arc $c$ of the circle, except for the whole circle, induces a \emph{clockwise order} on the vertices with birth times in that arc (formally it can be defined as $v \preceq w \iff C(t[v], t[w]) \subset c)$.
For any edge $f \in E$ which is neither degenerate nor easy and for which $t[f ]\cap R = \emptyset$ (resp. $t[f] \cap B = \emptyset$), \emph{the first} and \emph{the last} vertex of $f$ is the first and the last vertex
of $f$ in the clockwise order induced by arc $P_R \cup B \cup P_B$ (resp. $P_B \cup  R \cup P_R$ ).

Multipass greedy coloring procedure ($\MGC$) is presented as Algorithm \ref{alg:MGC}. 
For convenience we demand that the input function $t$ is injective and makes no edge degenerate.
\begin{algorithm-hbox}[!ht]
\caption{Multipass Greedy Coloring ($\MGC$)}\label{alg:MGC}
 	\textbf{input:} injection $t:V \to [0,1)$ which makes no edge degenerate \\
 	\ForEach{$v\in V$ such that $t[v]\in B\cup P_B$}{ $c(v) \gets $ blue}
 	\ForEach{$v\in V$ such that $t[v]\in R\cup P_R$}{ $c(v) \gets $ red}
 	\textbf{let} $(b_1, \ldots, b_\alpha)$ be the vertices with birth times in $P_B$ ordered in such a way that $t(b_i)\leq t(b_{i+1})$ \\
 	\textbf{let} $(r_1, \ldots, r_\beta)$ be the vertices with birth times in $P_R$ ordered in such a way that $t(r_i)\leq t(r_{i+1})$ \\
 	\While{in the current coloring $c$ there exists a blue edge with last vertex in $P_B$ or a red edge with last vertex in $P_R$}{
 		\For{$i=1, \ldots, \alpha$}{
 			\If{$b_i$ is the last vertex of a blue edge}{ $c(b_i) \gets $ red}
 		}
 		\For{$i=1, \ldots, \beta$}{
 			\If{$r_i$ is the last vertex of a red edge}{ $c(r_i) \gets $ blue}
 		}

 	} 	
\Return $c$
\end{algorithm-hbox}	
Let us make a few observations about the procedure. 
The vertices with birth times in $B \cup R$ do not change their colors during the evaluation of the procedure. 
Therefore easy edges are colored properly while edges contained entirely in $B$ or in $R$ are monochromatic in the returned coloring function. 
Each vertex from $P_B$ is initially blue and it can change its color to red if at some point of the evaluation it is the last vertex of a blue edge (similarly for vertices from $P_R$). 
In particular the number of red vertices in $P_R$ and blue vertices in $P_B$ can only decrease. 
Moreover at least one of these values decreases in each iteration of the main loop (lines 8-14). 
Therefore the procedure always stops.

In order to analyse for which birth time assignments the procedure produces proper coloring we define the following notions.
A sequence of edges $(s_1 , \ldots , s_r )$ is a \emph{chain} if $|s_i \cap s_{i+1} | = 1$  for $i\in  [r - 1]$ and $(s_i \cap s_{i+1}) \cap (s_j \cap s_{j+1})=\emptyset$ for $i\neq j$. 
Every chain has \emph{corresponding sequence of vertices}  $(v_1 , \ldots , v_{r-1})$ such that $s_i \cap s_{i+1} = \{v_i\}$. By definition all vertices in the corresponding sequence are distinct.
We say that a chain contains a vertex if that vertex belongs to at least one edge of a chain.

For a birth time assignment $t$, a chain $(s_1,\ldots, s_r)$ with corresponding vertex sequence $(v_1,\ldots,v_{r-1})$ is \emph{alternating} if there are no easy or degenerate edges in the chain, 
%all vertices $v_1,\ldots,v_{r-1}$ are distinct, 
the last vertex of $s_{i+1}$ is the first vertex of $s_i$ and $v_i \in P_B \cup P_R$ for $i \in [r - 1]$. 
In fact if $v_i \in  P_B$ (resp. $v_i \in  P_R$) then $v_{i+1} \in P_R$ (resp. $v_{i+1} \in P_B$) since edges are non degenerate.
A chain $(s_1,\ldots,s_r)$ is \emph{conflicting} if it is alternating and the last vertex of $s_1$ belongs to $B$ or $R$. 
A conflicting chain is \emph{complete} if the first vertex of $s_r$ belongs to $B$ or $R$.

\begin{prop}
\label{prop:cc2}
If for injective $t : V \to [0, 1)$ there are no degenerate edges and
$\MGC$ procedure produces coloring which is not proper, then there exists a complete
conflicting chain w.r.t. $t$.
\end{prop}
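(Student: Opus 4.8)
The plan is to assume the returned coloring $c$ is not proper and produce a complete conflicting chain by working backwards through the iterations of the main loop. Since easy edges are always colored properly and degenerate edges are excluded by hypothesis, any monochromatic edge in the final coloring $c$ is neither easy nor degenerate; say it is a blue edge $s_1$ (the red case is symmetric). An edge entirely contained in $B$ cannot be monochromatic-by-accident in a problematic way — wait, it can; but note that such an edge has no vertex in $P_B$, hence no "last vertex in $P_B$", so it never triggers the loop condition. The key point I would isolate first: if $s_1$ is monochromatic blue in the final coloring and is not contained in $B$, then its last vertex $v$ (in the order induced by $P_R\cup B\cup P_B$) lies in $P_B$. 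Indeed $s_1$ is not easy, so $t[s_1]\cap R=\emptyset$; it is not contained in $B$, so it meets $P_B$; and since it is blue at the end, every vertex of $P_B$ in it is blue at the end, so the loop condition "there exists a blue edge with last vertex in $P_B$" would have been true at the final iteration — contradiction — \emph{unless} $v\in B$, i.e. $s_1\subseteq P_R\cup B\cup P_B$ with last vertex in $B$. Actually I need to be careful: the clean statement is that either $s_1\subseteq B$ (then we are building toward the $B$-endpoint directly) or the last vertex of $s_1$ is in $P_B$ and was recolored to red at some iteration, which contradicts $s_1$ being blue at the end. So the only monochromatic blue edges surviving are those with last vertex in $B$.

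With that in hand, I would build the chain greedily. Start with a final monochromatic blue edge $s_1$; by the above its last vertex is in $B$. Look at its first vertex $v_1$ (in the same arc order). If $v_1\in B\cup R$ we are already done with a one-edge complete conflicting chain. Otherwise $v_1\in P_B\cup P_R$; in fact $v_1\in P_B$ is impossible because the first vertex precedes all others in the clockwise order on $P_R\cup B\cup P_B$, so a vertex of $P_B$ can only be first if the whole edge sits in $P_B$, i.e. is degenerate — excluded. Hence $v_1\in P_R$. Now the crucial recoloring analysis: $v_1$ received color blue in $c$. It started red (being in $P_R$) and flipped to blue because at some iteration it was the last vertex of a red edge $s_2$; since colors of $P_R$-vertices only move from red to blue and never back, and $v_1$ is blue at the end, the last flip made it blue, so there is a red edge $s_2$ with last vertex $v_1$, and $v_1$ is the last vertex of $s_2$, equivalently $v_1$ is the first vertex of $s_1$ and the last vertex of $s_2$ — this is exactly the alternating condition. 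I would then argue $s_2$ is monochromatic red \emph{at that iteration}: its last vertex $v_1$ is red at the moment of the flip-check but becomes blue, and I need all other vertices red then too. This is where the ordering of the passes matters: within a pass we process $P_B$-vertices then $P_R$-vertices in increasing birth-time order, and an edge counts as "blue" (resp. "red") when its last vertex is examined only if all earlier vertices already have that color. I would formalize: when we recolor $v_1$ to blue because it is the last vertex of a red edge $s_2$, all vertices of $s_2$ before $v_1$ are red at that instant; I then continue from $s_2$ with its first vertex $v_2$, which by the degeneracy exclusion lies in $B\cup R\cup P_B$, and if it is in $P_B$ it too must have been flipped (from blue to red) at an earlier-or-equal iteration, giving $s_3$, and so on.

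The induction is driven by a monotonicity/termination argument, which I expect to be the main obstacle to state cleanly. Each step passes to an edge whose relevant recoloring event happened no later than the previous one, and the vertices $v_1, v_2, \ldots$ are all distinct (they lie in $P_B\cup P_R$ and are the intersection vertices of consecutive edges; distinctness is part of the chain definition and must be checked — I would get it from the fact that once a vertex is "used" as a last vertex triggering a recolor, tracing further back strictly decreases the iteration index or moves earlier within a fixed pass, so no vertex repeats). The process cannot continue forever because at the very first pass no $P_B$-vertex has yet been recolored, so eventually the first vertex of the current edge $s_k$ lies in $B\cup R$; the edge then terminates the chain on the left, while $s_1$ terminated it on the right with last vertex in $B$. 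Collecting $(s_1,\ldots,s_k)$: consecutive edges share exactly the single vertex $v_i$ (they are distinct and the hypergraph — simple or not, we only need $|s_i\cap s_{i+1}|=1$ from the construction), the intersection vertices are pairwise disjoint by distinctness, there are no easy or degenerate edges (easy edges are properly colored and never monochromatic in any intermediate coloring restricted to one color class, and degenerate ones are excluded), the last vertex of $s_{i+1}$ equals the first vertex of $s_i$, and $v_i\in P_B\cup P_R$ — so the chain is alternating; the last vertex of $s_1$ is in $B$ (conflicting) and the first vertex of $s_k$ is in $B\cup R$ (complete). That is precisely a complete conflicting chain with respect to $t$, proving Proposition~\ref{prop:cc2}. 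The delicate points to get right in the full write-up are (a) the within-pass vs.\ across-pass bookkeeping that guarantees "$s_i$ monochromatic at the relevant instant", and (b) the distinctness of the $v_i$, which together give termination.
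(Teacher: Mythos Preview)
Your proposal is correct and follows essentially the same route as the paper: start from a monochromatic edge in the final coloring, argue its last vertex must lie in $B$ (or $R$) because the while loop terminated, then trace backwards through recoloring events to extend the chain, using the strict decrease of recoloring times to guarantee distinctness of the $v_i$ and termination. The paper packages the back-tracing more cleanly by stating two explicit invariants (each first vertex was recolored, and recoloring times strictly decrease along the sequence), which immediately gives both that the newly appended edge is new and that the process halts; your informal ``decreasing iteration index'' is the same idea, and your minor slip writing $B\cup R\cup P_B$ for the location of $v_2$ (it should be $R\cup P_B$, since $s_2$ cannot meet $B$) does not affect the argument.
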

\begin{proof}
Suppose that an edge $f$ is monochromatic in the coloring produced by the procedure. 
We assume that it is blue, the other case is symmetric.
We are going to construct a complete conflicting chain that starts with $f$. 
Note that $f$ can not have points in $R$ since these points are colored permanently red. 
Moreover the last vertex of $f$ can not lie in $P_B$ since the condition in the while loop (line 8) is false for the coloring returned by the procedure. 
Therefore $f \subset P_R \cup B$ and one element sequence $(f)$ is a conflicting chain. 
We are going to append edges to this sequence until the sequence becomes  complete. 
Additionally we keep two invariants as long as the sequence is not complete:	
\begin{enumerate}
	\item every first vertex of any edge in the sequence was recolored by the procedure,
	\item if $(v_1, \ldots, v_k)$ are the first vertices of edges of the sequence, then for $i=2, \ldots, k$ vertex $v_i$ has been recolored before vertex $v_{i-1}$. 
\end{enumerate}
If the first vertex of $f$ belongs to $B$ then the chain is complete.
In the opposite case the first vertex belongs to $P_R$ and it is blue so it must have been recolored. 
Therefore invariant (1) holds,  invariant (2) holds trivially. 
Suppose that we constructed a conflicting sequence $(f_1 , \ldots, f_k )$ which is not complete. 
It means that $f_k$ starts in $P_B$ or $P_R$ (wlog we assume that in $P_B$ ). 
Let us consider the first vertex of $f_k$.
It has been recolored, so at some point it was the last vertex of some edge $f$ which at that moment was blue. 
Let us observe that $f$ can not already belong to the sequence, because either it belongs to $B \cup P_B$ and there are no such edges in the sequence or $f$ starts in $P_R$ and its first vertex is blue which means that it has been recolored before the first vertex of $f_k$ . 
In particular it means that appending $f$ to the sequence constructed so far gives longer conflicting chain. 
Additionally whenever the extended chain is not complete both invariants are satisfied. 
Since the sequence can not be extended indefinitely, there must exist a complete conflicting chain.	
\end{proof}
%
%Proposition implies then if there are no degenerate edges and no complete conflicting chains w.r.t. birth-time assignment $t$, then the coloring produced by the procedure is proper. 
%Since any prefix of a conflicting chain is also a conflicting chain we have the following sufficient condition on $t$ under which the coloring produced by the procedure is proper.

\begin{corollary}
\label{cor:condT}
If there are no degenerate edges and no complete conflicting chains w.r.t. birth-time assignment $t$, then procedre $\MGC$ produces a proper coloring. 
\end{corollary}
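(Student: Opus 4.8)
The plan is to observe that this statement is simply the contrapositive of Proposition \ref{prop:cc2}, so essentially nothing new has to be proved. First I would recall that the $\MGC$ procedure is only ever applied to birth-time assignments $t$ that are injective and make no edge degenerate, so these standing assumptions are in force here as well. Proposition \ref{prop:cc2} asserts that, for such $t$, if the coloring returned by $\MGC$ is not proper then a complete conflicting chain with respect to $t$ exists. Reading this implication in the other direction: if no complete conflicting chain with respect to $t$ exists, then the coloring produced by $\MGC$ cannot be improper, i.e.\ it is proper. Since we are moreover assuming that there are no degenerate edges, every hypothesis appearing in Proposition \ref{prop:cc2} is satisfied, and the corollary follows at once.

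There is no genuine obstacle in this step; the only minor point to check is that the hypotheses of the corollary and the standing assumptions on admissible inputs to $\MGC$ are mutually consistent, which they are: the absence of degenerate edges is explicitly assumed, and injectivity of $t$ is part of the definition of an admissible input. If one preferred a self-contained argument, one could instead repeat verbatim the chain-extension construction from the proof of Proposition \ref{prop:cc2} --- starting from a hypothetical monochromatic edge, maintaining the two invariants, and extending until a complete conflicting chain is reached, thereby contradicting the hypothesis --- but invoking the proposition directly is cleaner and is the route I would take.
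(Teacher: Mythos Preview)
Your proposal is correct and matches the paper exactly: the paper states the corollary immediately after Proposition~\ref{prop:cc2} with no separate proof, treating it as the evident contrapositive. Your remark about the standing injectivity assumption on $t$ is a fair clarification but not something the paper bothers to spell out.
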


\section{Simple uniform hypergraphs}
\label{sec:simp2}
We are going to apply Lemma \ref{lm:local} to prove that if an $n$-uniform hypergraph has appropriately bounded maximum vertex degree then there exists a birth time assignment function avoiding degenerate edges and complete conflicting chains. 
Chains of edges containing a fixed vertex play important role so let us start with a bound on their number. 
The bound in the following proposition holds in any $n$-uniform hypergraph and we are going to use it also for hypergraphs which are not simple.

\begin{prop}
\label{prop:disjC}
	In an $n$-uniform hypergraph with maximum vertex degree $d$, any
vertex belongs to at most $d k (nd)^{k-1}$ chains of length $k$.
\end{prop}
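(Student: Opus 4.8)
The plan is to count chains of length $k$ through a fixed vertex $v$ by building them up edge by edge, bounding at each step the number of choices available. Recall that a chain $(s_1,\ldots,s_k)$ comes with a corresponding vertex sequence $(v_1,\ldots,v_{k-1})$ where $s_i\cap s_{i+1}=\{v_i\}$, and that $v$ is contained in the chain if it lies in some $s_i$. First I would note that the edge of the chain containing $v$ can be any one of the $s_i$, which costs a factor of at most $k$ (we will fold this in at the end). So fix which edge $s_j$ of the chain contains $v$, and reconstruct the chain outward from $s_j$ in two directions.

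The key counting step is the following. Once $s_j$ is fixed, I would argue that $s_j$ has at most $d$ choices: it is an edge containing $v$, and the maximum vertex degree is $d$. Then, to append $s_{j+1}$ to $s_j$: the linking vertex $v_j=s_j\cap s_{j+1}$ must be one of the $n$ vertices of $s_j$ (at most $n$ choices), and then $s_{j+1}$ must be one of the edges containing $v_j$ (at most $d$ choices), so each extension step in the forward direction costs at most $nd$. The same bound $nd$ applies to each backward extension step appending $s_{j-1}$ given $s_j$. Since the whole chain is obtained from $s_j$ by $k-1$ such extension steps in total (forward and backward combined), the number of chains with a prescribed position $j$ for the edge containing $v$ is at most $d\,(nd)^{k-1}$. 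Summing over the at most $k$ possible positions $j$ gives $d\,k\,(nd)^{k-1}$, as claimed.

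The main thing to be careful about — and the only place the argument is more than bookkeeping — is making sure the two-directional reconstruction genuinely covers every chain through $v$ and that the per-step factor is really $nd$ and not something larger. Concretely: when I extend from $s_j$ to $s_{j+1}$, the choice of linking vertex $v_j\in s_j$ determines $v_j$, and then $s_{j+1}$ is chosen among the $\le d$ edges through $v_j$; I should note that this $(v_j, s_{j+1})$ data, together with all previously chosen edges, determines the partial chain, so no overcounting beyond what I have accounted for occurs, and the disjointness condition on the $s_i\cap s_{i+1}$ in the definition of a chain only cuts down the count (it can be ignored for an upper bound). I would also remark that the bound is stated for arbitrary $n$-uniform hypergraphs, not just simple ones, which is exactly why the crude factor $nd$ per step (rather than anything exploiting simplicity) is the right thing to use here. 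This completes the proof.
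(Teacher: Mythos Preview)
Your proof is correct and follows essentially the same approach as the paper: anchor at an edge of the chain containing $v$ (factor $k$ for the position, factor $d$ for the edge through $v$), then reconstruct the chain outward in both directions at a cost of $nd$ per step. The paper packages this as assigning an injective ``signature'' to each chain (fixing the \emph{first} edge containing $v$ rather than any edge, which avoids the mild overcount you allow), but the counting is identical.
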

\begin{proof}
Let us fix vertex $v$ and let $s = (s_1,\ldots,s_k)$ be a chain containing $v$  with corresponding vertex sequence $(v_1,\ldots,v_{k-1})$. We associate to $s$ a signature (i.e. a tuple) with following entries:
\begin{enumerate}
\item index $i$ of the first edge of the chain that contains $v$ (i.e. smallest $i$ such that $v\in s_i$),
\item index of $s_i$ among the edges containing $v$,
\item sequence of $i - 1$ pairs of numbers $((a_2 , d_2 ),\ldots, (a_i , d_i ))$ where $a_j$ is the index of $v_{j-1}$ within $s_j$ and $d_j$ is the index of $s_{j-1}$ among the edges containing $v_{j-1}$,
\item sequence of $k  - i$ pairs of numbers $((a_{i+1},d_{i+1}),\ldots, (a_k, d_k))$ where $a_j$ is the index of $v_{j-1}$ within $s_{j-1}$ and $d_j$ is the index of $s_j$ among the edges containing $v_{j-1}$ .
\end{enumerate}

It is easy to check that different chains of length $k$ containing $v$ have different signatures. 
The first element of the signature is a number from $[k]$, the second from $[d]$, third and fourth form together a sequence of length $k - 1$ consisting of pairs of number – first from $[n]$ and second from $[d]$ (first element of the signature determines how the sequence should be split in two). 
Therefore there are no more than $kd(nd)^{k-1}$ signatures and the number of chains of length $k$ containing $v$ is not greater.
\end{proof}

Let $S$ be the set of all chains in considered hypergraph. 
We are going to apply Lemma \ref{lm:local} to prove that it is possible to  avoid events like $s$ is a complete conflicting chain (w.r.t. randomly chosen $t$) for any $s\in S$. 
The proposition above gives a bound on the number of such events that a fixed vertex can participate in. 
Probabilities of such events are more troublesome. 
In fact probability of an event that $s$ is conflicting depends on the sizes of intersections among the edges. 
A chain $(s_0,\ldots,s_{k-1})$ is \emph{disjoint} if $s_i\cap s_j = \emptyset$ for $|i-j| > 1$, it is an \emph{almost disjoint cycle} if $s_0\cap s_{k-1}\neq \emptyset$ and both $(s_0,\ldots,s_{k-2})$ and $(s_1,\ldots,s_{k-1})$ are disjoint chains.  
Clearly, every chain which is not disjoint contains a subsequence of consecutive elements which is an almost disjoint cycle of length at least 3. 
The definition of almost disjoint cycle allows the intersection $s_0 \cap s_{k-1}$ to be arbitrarily large and these are the only two edges of the cycle where this is allowed. 
In simple hypergraphs we have $|s_0 \cap s_{k-1}|=1$, therefore the chain is symmetric and any sequence $(s_{(j+1)_k},s_{(j+2)_k} \ldots,s_{(j+k)_k})$ (indices taken $\mod k$) is also a chain and an almost disjoint cycle. 
In a simple hypergraph with relatively large vertex degree, the number of almost disjoin cycles containing $v$ is much smaller than the bound on the number of chains containing $v$ from Proposition \ref{prop:disjC}. 

\begin{prop}
\label{prop:adisjC}
In a simple $n$-uniform hypergraph with maximum vertex degree $d$,
any vertex belongs to at most $k d (k-1) (nd)^{k-2} n^2$ almost disjoint cycles of length $k\geq 3$.
\end{prop}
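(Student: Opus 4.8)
The plan is to mimic the signature-counting argument of Proposition~\ref{prop:disjC}, but to exploit the extra rigidity that an almost disjoint cycle imposes in a simple hypergraph. Recall that in a simple hypergraph an almost disjoint cycle $(s_0,\ldots,s_{k-1})$ has $|s_0\cap s_{k-1}|=1$, so it is a genuine cyclic structure: writing $v_i$ for the (unique) vertex in $s_i\cap s_{i+1}$ with indices mod $k$, all of $v_0,\ldots,v_{k-1}$ are distinct, and any cyclic rotation of the sequence of edges is again an almost disjoint cycle. First I would fix the vertex $v$ and, exactly as before, encode a cycle through $v$ by: the index $i\in[k]$ of the first edge of the cycle containing $v$, the index in $[d]$ of that edge $s_i$ among the edges through $v$, and then walk around the cycle recording, for each consecutive pair, one index in $[n]$ locating the shared vertex inside one of the two edges and one index in $[d]$ locating the other edge among those through the shared vertex. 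The point is that a cycle has $k$ consecutive pairs of edges (it closes up), not $k-1$ as a chain does.

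The key observation that saves a factor is that the closing step is \emph{not free}: once we know all of $s_i,\ldots$ around the cycle back to $s_{i-1}$, the two edges $s_{k-1}$ and $s_0$ (or, after rotation, the two edges adjacent to the ``seam'') are both already determined, and the only remaining datum is which vertex of the intersection $s_0\cap s_{k-1}$ plays the role of the corresponding $v$; in a simple hypergraph that intersection is a single vertex, so this costs nothing, but specifying \emph{where} that vertex sits inside each of the two edges costs at most $n$ each, i.e.\ a factor $n^2$, and specifying which edge-index to continue with is already forced. Concretely I would argue that the number of distinct signatures is at most $k$ (choice of $i$) times $d$ (index of $s_i$ among edges through $v$) times $(nd)^{k-2}$ (the $k-2$ genuinely free consecutive-pair steps as one traverses from $s_i$ around to within one step of closing) times $(k-1)$ (which of the remaining positions is the ``closing'' position) times $n^2$ (locating the closing vertex inside each of its two edges), and that different almost disjoint cycles of length $k$ through $v$ get different signatures. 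Multiplying gives the claimed bound $k d(k-1)(nd)^{k-2}n^2$.

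The main obstacle, and the step that needs the most care, is the bookkeeping of exactly which pieces of data are forced versus free as one walks around the cycle: in the chain case the two endpoints are ``open'' so every link contributes an $(n,d)$ pair, whereas here the cycle closes, so one must verify that traversing $s_i\to s_{i+1}\to\cdots$ one can reconstruct every edge from the signature using only $k-2$ such pairs, with the final link reconstructed ``for free'' from the fact that both of its edges are already known and they meet in exactly one vertex — the only ambiguity being the position of that vertex within each of the two edges, hence the $(k-1)n^2$ factor accounting for the location of the seam and the two intra-edge indices. I would also need to double-check the edge cases $k=3$ (where $s_0$ and $s_{k-1}=s_2$ are adjacent in two ways) to make sure the signature is still injective and the count is not overstated; the stated bound is generous enough that the argument should go through cleanly.
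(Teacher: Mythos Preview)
Your approach is essentially the paper's: fix $v$, record the position $i$ of the first edge through $v$, encode a chain of $k-1$ consecutive edges starting at $s_i$, then recover the remaining edge from the fact that in a simple hypergraph two vertices lie in at most one common edge, paying $n^2$ to locate those two vertices inside the already-known neighbouring edges. The paper phrases step two as ``index of the chain $(s_{(i)_k},\ldots,s_{(i+k-2)_k})$ among the chains of length $k-1$ containing $v$'' and invokes Proposition~\ref{prop:disjC} directly, which is where the factor $d(k-1)(nd)^{k-2}$ comes from; you instead unroll the walk by hand.

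The one soft spot is your justification of the factor $(k-1)$. Once $i$ is fixed and you traverse $s_i\to s_{i+1}\to\cdots\to s_{i+k-2}$ (indices mod $k$), there is exactly one missing edge $s_{i-1}$ and hence exactly one ``closing position''; there is no choice of $k-1$ here. In the paper this factor is not a separate datum at all --- it is baked into the count $d(k-1)(nd)^{k-2}$ from Proposition~\ref{prop:disjC}, where it records the position of $v$ within a length-$(k-1)$ chain (a piece of information that is in fact redundant after you have already recorded $i$, so the stated bound is slightly slack). Your argument as written actually yields the tighter bound $k\,d\,(nd)^{k-2}n^2$, and then inserting the extra $(k-1)$ is harmless for an upper bound but not for the reason you give. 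Either drop the $(k-1)$ and note that the stated bound follows a fortiori, or replace your manual walk by a direct appeal to Proposition~\ref{prop:disjC} as the paper does. The $k=3$ case needs no special treatment.
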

\begin{proof}
Let us fix vertex $v$. Let $s = (s_0,\ldots, s_{k-1} )$ be an almost disjoint cycle containing $v$ with corresponding vertex sequence $(v_0, \ldots, v_{k-2})$. 
Let $v_{k-1}$ be the element of $s_0\cap s_{k-1}$. We assign the following signature to $s$:
\begin{enumerate}
	\item index $i$ of the first  edge of the chain that contains $v$,
	\item index of chain $(s_{(i)_k},s_{(j+2)_k} \ldots,s_{(i+k-2)_k})$ among the chains of length $k-1$ containing $v$,
	\item index of $v_{(i-1)_k}$ among the vertices of $s_i$,
	\item index of $v_{(i+k-2)_k}$ among the vertices of $s_{(i+k-2)_k}$.
\end{enumerate}
Once again different almost disjoint chains containing $v$ have different signatures. 
If two chains contain $v$ in edges on different position, they are distinguished by first entry. 
In the other case if the chains of $k-1$ edges starting from that position are different, the chains are distinguished by second entry. 
Finally the last edge is uniquely determined by vertices  $v_{(i-1)_k}$ and 
$v_{(i+k-2)_k}$ so if entries (3) and (4) are also the same for both chains then the chains must be equal. Counting is straightforward: 
first element of the signature is a number from $[k]$, second from $[d(k - 1)(nd)^{k-2} ]$ (by Proposition \ref{prop:disjC}), third  and fourth are numbers from $[n]$. 
Altogether it gives no more than $k d (k-1) (nd)^{k-2} n^2$ signatures and the number of almost disjoint chains of length $k$ containing $v$ is no greater.
\end{proof}

We are ready to prove the first main theorem.

\begin{proof}[Proof of Theorem \ref{thm:maxD}]
Let $(V, E)$ be an $n$-uniform simple hypergraph with maximum vertex degree $d$. It
is sufficient to prove that there exists an injective $t : V \to [0, 1)$ such that:
\begin{enumerate}
	\item there are no degenerate edges,
	\item and there are no complete conflicting disjoint chains,
	\item and there are no almost disjoint cycles which are alternating (as chains).
\end{enumerate}
Since every conflicting chain which is not disjoint contains an alternating almost
disjoint cycle, the conditions above imply the conditions of Corollary \ref{cor:condT}. Hence procedure $\MGC$ produces a proper two coloring when such $t$ is given on the input. 
We are going to apply Lemma \ref{lm:local}. 
For every vertex $v \in V$ we choose a birth time $t(v)$ uniformly at random from $[0, 1)$ independently from all other choices (so $\vS$ from the lemma is the sequence of these random variables). 
The set of events $\evS$ consists of three kinds of events:
\begin{enumerate}
	\item For every $f\in E$ let $\dg_f$ be the event that $f$ is degenerate.
	\item For every disjoint chain $s$ let $\ccc_s$ be the event that $s$ is  complete conflicting.
	\item For every almost disjoint cycle $s$ let $\acc_s$ be the event that $s$ is alternating.
\end{enumerate}
Each of these events is completely determined by birth times of vertices from corresponding chain or edge. 
We analyse contribution to the local polynomial for each kind of events. 
Let $S$ be the set of chains of edges, $S_d \subset S$  the set of disjoint chains and $S_a \subset S$ the set of almost disjoint cycles. 
Let $K$ be the maximum length of a disjoint chain in the hypergraph. 
In fact, any larger constant is also sufficient for our needs. 
\\
\textbf{Events $\dg$.} For every $f \in E$ we have:
\[
	\prob( \dg_f) = p^n.
\]
Each vertex $v \in V$ belongs to at most $d$ edges which means that a random variable $t(v)$ belongs to $\vars(f)$ for at most $d$ edges.
Therefore the contribution of the events of type $\dg$ to the local polynomial of $w_{t(v)}(z)$ is at most:
\[
	w_\dg (z) = p^n dz^n .
\]
\textbf{Events $\ccc$.}  
Let $(s_1,\ldots, s_k ) \in S_d$ and let $(v_1,\ldots, v_{k-1})$ be the corresponding sequence of vertices. 
For an event $\ccc_s$ to happen it is necessary that:
\begin{enumerate}
\item the sequence is alternating,
\item $t[s_1] \subset P_R \cup B$ or $t[s_1]\subset P_B \cup R$,
\item $t[s_k] \subset B \cup P_B$ or $t[s_k] \subset R \cup P_R$.
\end{enumerate}
Vertices $v_1 , v_2 ,\ldots, v_{k-1}$ must have birth times from alternating intervals $P_B$ and $P_R$. 
The probability of that is $2(p/2)^{k-1}$. 
Let us fix birth times for these vertices. 
For $i\in \{2,\ldots, k - 1\}$ all vertices of edge $s_i$ must have birth times in the arc $C(t(v_i ), t(v_{i-1}))$, let $\delta_i$ be the length of that arc (i.e. clockwise distance from $t(v_i)$ to $t(v_{i-1})$). 
The vertices of the first edge must all have birth times in $P_R \cup B$ or in
$P_B \cup R$ hence in the specific arc of length $\delta_1\leq 1/2$ 
(precisely from $t(v_1)$ to $(1-p)/2$ or $1-p/2$).
Similarly the vertices of the last edge must have birth times from the arc of  length $\delta_k \leq 1/2$ from $0$ or $1/2$ to $t(v_{k-1})$. 
Then the probability that all the vertices of the chain, except for $v_1,\ldots, v_{k-1}$, have birth times in corresponding arcs is $\delta_1 \delta_k \left( \prod_{i=1, \ldots, k} \delta_i \right)^{n-2}$. 
We have $\sum_{i=1, \ldots, k} \delta_i = (k-p)/2$ so the product is at most $(\frac{1-p/k}{2})^{k}$. Hence
\[
	\prob(\ccc_s) \leq 2 (p/2)^{k-1} \left(\frac{1-p/k}{2}\right)^{k(n-2)}.
\]
For a fixed $v\in V$ the number of sequences $s \in S_d$ of length $k$ that contains vertex $v$ is not greater than the number of chains of length $k$ that contain $v$, which is at most $dk(nd)^{k-1}$ by Proposition \ref{prop:disjC}. 
Every disjoint chain $s$ of length $k$ contains exactly $(n-1)k+1$ vertices. Accordingly the exponent at $z$ in the monomial corresponding to event $\ccc_s$ should be $(n-1)k+1$. 
To keep formulas simple we use upper bound $nk$.  
The contribution of the events of type $\ccc$ to the local polynomial is
at most:
\begin{align}
\label{eq:wcc}
	w_{\ccc}(z)& = \sum_{k=1}^{K} 
	2 (p/2)^{k-1} \left(\frac{1-p/k}{2}\right)^{k(n-2)}
	(dk(nd)^{k-1}) z^{kn} \nonumber \\
	& = \frac{1}{pn} \sum_{k=1}^{K} 4 k 
	 \left(\frac{dpn}{2^{n-1}}\right)^{k} (1-p/k)^{k(n-2)} z^{kn}.
\end{align}
\textbf{Events $\acc$.}  
Let $s=(s_1, \ldots, s_k)$ be an almost disjoint cycle  with corresponding vertex sequence $(v_1, \ldots, v_{k-1})$ that can occur as alternating. Let $v_k$ be such that $s_1 \cap s_k =\{v_k\}$. 
Just like in the previous case, vertices $v_1 , v_2 ,\ldots, v_{k-1}$ must have birth times from alternating intervals $P_B$ and $P_R$. 
Let us fix birth times of these vertices and for $i\in \{2,\ldots, k-1\}$ let $\delta_i$ be the clockwise distance from $t(v_i)$ to $t(v_{i-1})$. 
Additionally we put $\delta_1$ to be the clockwise distance from $t(v_1)$ to $1/2$ , if $t(v_1)\in P_R$ and from $t(v_1)$ to $1$ when  $t(v_1)\in P_B$. 
Analogously we put $\delta_k$ to be the clockwise distance from $(1-p)/2$ to $t(v_{k-1})$, if $t(v_{k-1})\in P_R$ and from $1-p/2$ to $t(v_{k-1})$ when  $t(v_{k-1})\in P_B$. 
Then conditional probability that vertices of the chain, except for $v_1, \ldots, v_{k-1}$, have birth times in corresponding arcs is $\delta_1 \left( \prod_{i=1,\ldots, k} \delta_i  \right)^{(n-2)}$.
 This time we have $\sum_{i=1,\ldots, k} \delta_i= (k+p)/2$ so the product is at most $(\frac{1+p/k}{2})^k$. Hence
\[
	\prob(\acc_s)  \leq 2 (p/2)^{k-1} \left(\frac{1+p/k}{2}\right)^{k(n-2)}.
\]
By Proposition \ref{prop:adisjC} the number of almost disjoint cycles of length $k$ containing vertex $v$ is smaller than $d^{k-1} n^k k^2$. Again we increase exponents at $z$ to make formulas simpler.
The contribution of the events of type $\acc$ to the local polynomial is
at most
\begin{align}
	w_{\acc}(z)& = \sum_{k=3}^{K} 
	2 (p/2)^{k-1} \left(\frac{1+p/k}{2}\right)^{k(n-2)} 
	(d^{k-1} n^k k^2) z^{kn} \nonumber \\
	&= \frac{4 n}{2^{n-1}} \sum_{k=3}^{K} k^2 \left(\frac{dpn}{2^{n-1}} \right)^{k-1} (1+p/k)^{k(n-2)} z^{kn}.
\end{align}

By the above considerations, polynomial
$
	w(z)= w_\dg(z)+w_\ccc(z)+w_\acc(z)
$
dominates all local polynomials $w_{t(v)}$ for $v\in V$. 
To apply Lemma \ref{lm:local} it is sufficient to find $\tau_0$ for which $w(\frac{1}{1-\tau_0}) \leq \tau_0$. 
Then we would know that there exists a birth time assignment function $t$ that avoids all the events of types $\dg, \ccc,\acc$. 
We choose $p=\frac{\ln(n)}{n}$, $\tau_0=1/n$, $z_0=\frac{1}{1-\tau_0}$ and $d=\frac{1}{2 e \ln(n)} 2^{n-1}$. 
Then $(z_0) ^n \sim e$ and $(1-p/k)^{k(n-2)} \sim 1/n$ and $(1+p/k)^{k(n-2)} \sim n$  uniformly for all $k\geq 1$. 
Plugging these values into local polynomials we get
\begin{align*}
	w_\dg(z_0) &= o(1/n), \\
	w_\ccc(z_0) &\sim \frac{1}{\ln(n)} \sum_{k=1}^{K} 4k 
	 \left(\frac{1}{2 e}\right)^k (1/n) e^k \\
	&= \frac{4}{n \ln (n)} \sum_{k=1}^{K} k 2^{-k} = o(1/n),\\
   	w_\acc(z_0)	&\sim \frac{4n }{2^{n-1}} 
   	\sum_{k=1}^{K} k^2 \left(\frac{1}{2 e } \right)^{k-1} e^k \\
   	&= \frac{4en }{2^{n-1}} \sum_{k=1}^{K} k^2 2^{-k+1} = o(1/n).
\end{align*}
Therefore $w(\frac{1}{1-\tau_0})  \leq  \tau_0$ if $n$ is large enough.
It proves that for every large enough $n$ every simple $n$-uniform hypergraph of maximum vertex degree at most $\frac{1}{2 e \ln(n)} 2^{n-1}$ is two colorable. 
Hence $D^*(n) = \Omega(\frac{2^{n}}{\ln(n)})$.
\end{proof}

The bounds on values $w_\dg(z_0)$ and $w_\acc(z_0)$ are exponentially small in terms of $n$ so the value of $w_\ccc(z_0)$ which is $\theta(\frac{1}{n\ln(n)})$ is critical for the derived bound on $D^*(n)$. 
The first element of the sum (\ref{eq:wcc})  corresponds to complete chains of length 1, i.e. the edges that are entirely contained in $B$ or in $R$. 
To avoid such events we need $p=\Omega(\ln(n)/n)$. 
It suggests that maybe choosing $p$ larger than $\ln(n)/n$ might improve the bound. Let us consider an element of the sum (\ref{eq:wcc}) for large $k$. 
The coefficient of corresponding monomial is roughly $(pn)^{k-1} (d 2^{-n+1})^k (1-p)^n$. 
Even if we ignore the increase of $(pn)^{k-1}$ when $p$ is increased, the  decrease of the factor $(1-p)^n$ is not related to $k$. 
It means that we can not essentially improve the bound unless we can bound  the maximum length of a chain by a function which is $o(\ln(n))$.
%In Section \ref{sec:bsimple} we consider different set of evente and there  the length of a chain is bounded by $\ln(n)$. 

\section{Larger number of colors - procedure $\MGCr$}
\label{sec:simpr}
Theorem \ref{thm:maxD} easily generalizes to larger number of colors. 
%Within this section we present generalization of $\MGC$ to $r$-coloring. 
%Problem of $r$-coloring of simple hypergraphs is a special case of problems treated in next section when we consider $r$-coloring of $b$-simple hypergraphs.

We present a generalization of $\MGC$ which tries to construct a proper $r$-coloring of underlying hypergraph.
We partition unit interval into consecutive intervals $C_0, P_0, C_1, P_1, \ldots, P_{r-2},C_{r-1}, P_{r-1}$ in such a way that $|C_i|=(1-p)/r$ and $|P_i|=p/r$. 
Let  $C=\bigcup_{i=0}^{r-1} C_i$ and $P=\bigcup_{i=0}^{r-1} P_i$.
Suppose that $t$ is an injective birth time assignment function which makes no edge $r$-degenerate (i.e. each edge have at least one vertex with birth time in $C$). 
The modified algorithm first assigns color $i$ to vertices with birth times in $C_i\cup P_i$. 
Vertices from $C_i$ get color $i$ permanently. 
A vertex from $P_i$ can change its color only to $(i+1)_r$. 
As long as there exists a monochromatic edge colored with $i$ and with the last vertex in some interval $P_i$ the algorithm inspects vertices with birth times in $P$ in the order induced by birth times. 
Each time when the algorithm meets a vertex $v$ with birth time in $P_i$ which is the last vertex of an edge whose all vertices are colored with $i$, the algorithm changes color of $v$ from $i$ to $(i+1)_r$. 
We call such generalized algorithm $\MGCr$.

Extending definitions from Section \ref{sec:MGC} we say that an edge is \emph{easy} if the birth times of its vertices are not contained in any interval of the form $P_{(i-1)_r},C_i,P_i$.
%it has at least two vertices with birth times in different intervals $C_i$. 
A chain $(s_1, \ldots,  s_k)$ with corresponding sequence of vertices $(v_1, \ldots, v_{k-1})$ is \emph{$r$-alternating} if there are no easy or degenerate edges in the sequence, for each $i=1, \ldots, k-1$ vertex $v_i$ is the first vertex of edge $s_i$ and the last vertex of edge $s_{i+1}$ and vertices $(v_1, \ldots, v_{k-1})$  belong to consecutive intervals $P_{(j-1)_r}, P_{(j-2)_r}, \ldots, P_{(j-k+1)_r}$ for some $j\in [r]$.
 An alternating chain $(s_1, \ldots, s_k)$ is \emph{$r$-conflicting} if $s_1$ ends in $C$ and \emph{complete $r$-conflicting} if additionally $s_k$ starts in $C$. 
 Using these definition the proof of Proposition \ref{prop:cc2} in a straightforward way generalizes to the case of $r$-coloring. 

\begin{prop}
\label{prop:ccr}
If for injective $t : V \to [0, 1)$ there are no $r$-degenerate edges and
$\MGCr$ procedure produces a coloring which is not proper, then there exists a complete $r$-conflicting chain w.r.t. $t$.
\end{prop}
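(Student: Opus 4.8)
The plan is to mimic the proof of Proposition \ref{prop:cc2} almost verbatim, replacing the two-color bookkeeping by the cyclic group $\mathbb{Z}_r$ of colors. Suppose $\MGCr$ returns a coloring that is not proper, so some edge $f$ is monochromatic, say in color $i$. First I would observe, exactly as in the two-color case, that $f$ cannot have a vertex with birth time in any $C_j$ with $j\neq i$ (such vertices are permanently colored $j\neq i$), and that the last vertex of $f$ cannot lie in $P_i$, because otherwise the while loop of $\MGCr$ would not have terminated. Hence all vertices of $f$ have birth times in $P_{(i-1)_r}\cup C_i\cup P_i$ with the last vertex not in $P_i$, which means $f$ starts and ends within the arc $P_{(i-1)_r}\cup C_i$, so in particular $f$ is not easy, the one-edge sequence $(f)$ is $r$-alternating, and $s_1=f$ ends in $C$, i.e. $(f)$ is an $r$-conflicting chain.

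Next I would grow the chain edge by edge toward a complete one, maintaining the same two invariants as in Proposition \ref{prop:cc2}: (1) every first vertex of an edge already in the sequence was recolored by the procedure, and (2) the first vertices $v_1,\dots,v_k$ of the successive edges were recolored in the order $v_k$ before $v_{k-1}$ before $\dots$ before $v_1$. The base case is handled exactly as above: if the first vertex of $f$ lies in $C$ the chain is already complete; otherwise it lies in $P_{(i-1)_r}$, is colored $i\neq (i-1)_r$, hence was recolored, so invariant (1) holds and (2) is vacuous. For the inductive step, given a non-complete $r$-conflicting chain $(f_1,\dots,f_k)$ with $f_k$ starting in some $P_\ell$, the first vertex $v_k$ of $f_k$ was recolored; at the moment it was recolored it was the last vertex of some edge $f$ all of whose vertices were colored $\ell$ at that moment. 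I would then append $f$ as $f_{k+1}$, checking (a) $f\notin\{f_1,\dots,f_k\}$, by the same case analysis as in the two-color proof: either $f$ starts in $C$ (and no edge already in the chain starts in $C$, since the chain is not complete), or $f$ starts in $P_{(\ell-1)_r}$ with its first vertex still colored $\ell$ hence recolored, and by invariant (2) this recoloring happened before that of $v_k$, so $f$ is new; (b) that $f$ together with the previous edge forms a valid link of an $r$-alternating chain — the shared vertex $v_k$ is the last vertex of $f_{k+1}=f$ and the first vertex of $f_k$, all edges are non-degenerate and non-easy, and the intervals of the vertices $v_1,\dots,v_k$ run through consecutive $P_j$'s because each color change advances $i\mapsto(i+1)_r$, exactly as in the remark following the definition of $r$-alternating; (c) both invariants persist as long as the new chain is still not complete. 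Since the sequence of distinct edges cannot grow indefinitely, at some point the chain becomes complete, giving a complete $r$-conflicting chain.

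The only genuinely new point compared with Proposition \ref{prop:cc2} is the cyclic arithmetic in the color indices: one must check that the vertices $v_1,\dots,v_{k-1}$ really fall into the consecutive intervals $P_{(j-1)_r},P_{(j-2)_r},\dots$ required by the definition of $r$-alternating, and that the "permanent color" and "one allowed recoloring" structure of $\MGCr$ still forces the same dichotomy (starts in $C$ versus starts in the preceding $P$) that drove the two-color argument. I expect this to be the main, though still routine, obstacle: it is purely a matter of translating "blue/red" and "$P_B$/$P_R$" into the indexed language $C_i$, $P_i$, $(i+1)_r$ and verifying the indices line up; everything else is a line-by-line copy of the proof of Proposition \ref{prop:cc2}. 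Accordingly I would simply state that the proof of Proposition \ref{prop:cc2} carries over, indicating these index checks, rather than rewriting the whole argument.
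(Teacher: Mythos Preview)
Your proposal is correct and takes exactly the approach the paper intends: the paper gives no separate proof of Proposition~\ref{prop:ccr} at all, merely stating that ``the proof of Proposition \ref{prop:cc2} in a straightforward way generalizes to the case of $r$-coloring,'' and your sketch spells out precisely that generalization with the same two invariants and the same grow-the-chain argument. Your case split for showing the appended edge is new (``starts in $C$'' versus ``starts in $P_{(\ell-1)_r}$'') is in fact a slightly cleaner rephrasing of the paper's two-color dichotomy ``$f\subset B\cup P_B$'' versus ``$f$ starts in $P_R$,'' since ``starts in $C$'' is equivalent to ``contained in $C_\ell\cup P_\ell$'' once non-degeneracy is used.
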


We defined chains as sequences of edges. 
However sometimes it is convenient to  consider analogously defined  sequences of subsets of vertices which are not necessarily edges. 
We call such sequences \emph{chains of sets}. 
A chain of sets is $m$-uniform if every set of the chain has $m$ elements. 
Definitions of almost disjoint cycle of sets and of being disjoint,  $r$-alternating, $r$-conflicting and complete $r$-conflicting generalize in a natural way to chains of subsets. 
Using these notions we derive the following lemma.
\begin{lemma}
\label{lem:probC}
	Let $s=(s_1, \ldots, s_k)$ be an $(m+2)$-uniform disjoint chain of sets 
or almost disjoint cycle of sets with $|s_1\cap s_k|=1$. When the birth time assignment is chosen uniformly at random then
\begin{align*}
	\prob(s \text{ is $r$-alternating}) &\leq 	r (p/r)^{k-1} \left( \frac{1+p/k}{r}\right)^{m k}, 
\\
	\prob(s \text{ is $r$-conflicting}) &\leq 	r (p/r)^{k-1} \left( \frac{1}{r}\right)^{m k}, \\
%\end{align*}
%and
%\begin{align*}
	\prob(s \text{ is complete $r$-conflicting}) &\leq 	r (p/r)^{k-1} \left( \frac{1-p/k}{r}\right)^{m k}. 
\end{align*}
\end{lemma}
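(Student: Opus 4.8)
The plan is to mimic the probability computations already carried out in the proof of Theorem~\ref{thm:maxD} for the events $\ccc_s$ and $\acc_s$, but now stated abstractly for chains (or almost disjoint cycles) of $(m+2)$-uniform sets, so that the three bounds come out uniformly. First I would set up the common skeleton: a chain (resp.\ almost disjoint cycle) $s=(s_1,\dots,s_k)$ of sets with corresponding vertex sequence $(v_1,\dots,v_{k-1})$, and in the cyclic case the extra vertex $v_k$ with $s_1\cap s_k=\{v_k\}$. For any of the three properties it is necessary that $v_1,\dots,v_{k-1}$ fall into the consecutive intervals $P_{(j-1)_r},P_{(j-2)_r},\dots$ for some $j\in[r]$; since there are $r$ choices of $j$ and each $P_i$ has length $p/r$, the probability of this happening is exactly $r\,(p/r)^{k-1}$. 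Conditioning on the birth times of $v_1,\dots,v_{k-1}$, I would then define the arcs $\delta_1,\dots,\delta_k$ governing where the remaining $m$ vertices of each set must land, exactly as in the two earlier computations: for the internal sets $s_i$, $2\le i\le k-1$, the arc is $C(t(v_i),t(v_{i-1}))$ of length $\delta_i$; for $s_1$ and $s_k$ the arc is the appropriate one bounded by an interval endpoint, with lengths $\delta_1,\delta_k\le (1-p)/r$ or $\le 1/r$ depending on which property we want. Because the $s_i$ are (almost) disjoint, these arc-confinement events for distinct internal sets are independent given the $v_i$, so the conditional probability is $\delta_1\big(\prod_{i=1}^k\delta_i\big)^{m}$ in the cyclic/alternating case and $\delta_1\delta_k\big(\prod_{i=1}^k\delta_i\big)^{m}$ in the chain case (and similarly for complete $r$-conflicting, where both endpoints are pinned).

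The arithmetic heart is then the bound on $\prod_{i=1}^k\delta_i$, obtained by AM--GM from a linear constraint on $\sum_i\delta_i$. Going around the whole circle and accounting for how the endpoint arcs $\delta_1,\delta_k$ overshoot or undershoot the $v_i$-to-$v_{i-1}$ pattern, one gets $\sum_{i=1}^k\delta_i = (k+p)/r$ in the $r$-alternating case (the endpoints reach past into the $P$-intervals, adding $p/r$ worth of slack as in the $\acc$ computation), $\sum_{i=1}^k\delta_i=k/r$ in the plain $r$-conflicting case, and $\sum_{i=1}^k\delta_i=(k-p)/r$ in the complete $r$-conflicting case (both endpoints land on $C$-interval boundaries, exactly as in the $\ccc$ computation for $r=2$). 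AM--GM then gives $\prod\delta_i\le((1+p/k)/r)^k$, $(1/r)^k$, and $((1-p/k)/r)^k$ respectively. Multiplying the factor $r\,(p/r)^{k-1}$ by $(\prod\delta_i)^{m}$ — and absorbing $\delta_1\le 1/r$ (and $\delta_k\le 1/r$ where applicable) into the bound rather than tracking it exactly, which only helps — yields precisely the three claimed inequalities.

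The main obstacle, and the only place real care is needed, is the bookkeeping of the endpoint arcs $\delta_1$ and $\delta_k$: one must check that in each of the three regimes they can indeed be chosen consistently with the cyclic $P$-interval pattern of the $v_i$'s and that the resulting $\sum_i\delta_i$ is exactly $(k+p)/r$, $k/r$, or $(k-p)/r$ — the same computation that appears, for $r=2$, in the $\ccc$ and $\acc$ paragraphs of the proof of Theorem~\ref{thm:maxD}, where the relevant sums were $(k-p)/2$ and $(k+p)/2$. Once that is verified, the rest is a direct AM--GM estimate. I would therefore structure the proof as: (1) the choice-of-$j$ factor $r(p/r)^{k-1}$; (2) the conditional independence of arc-confinement for disjoint internal sets; (3) the three computations of $\sum_i\delta_i$ together with AM--GM; (4) trivial bounding of the endpoint factors by $1/r$. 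Note that the plain $r$-conflicting bound needs only $\delta_1,\dots,\delta_k$ to sum to $k/r$, which follows because the chain (not being required complete) has a free endpoint that we may take to reach exactly to an interval boundary on one side while the other side is unconstrained beyond $\delta\le 1/r$.
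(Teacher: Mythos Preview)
Your proposal is correct and follows essentially the same approach as the paper: condition on $v_1,\dots,v_{k-1}$ landing in the appropriate consecutive $P$-intervals (contributing the factor $r(p/r)^{k-1}$), define the arc lengths $\delta_i$ with the three different endpoint conventions yielding $\sum_i\delta_i$ equal to $(k+p)/r$, $k/r$, and $(k-p)/r$ respectively, and apply AM--GM to bound $\prod_i\delta_i^{m}$. The paper simply records the conditional probability as at most $\prod_i\delta_i^{m}$ without separately tracking the extra $\delta_1$ or $\delta_k$ factors you mention (and without distinguishing the chain and cycle cases at that step), but since you absorb those factors by $\delta_j\le 1$ anyway the two arguments coincide.
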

\begin{proof}
Let $s=(s_1, \ldots, s_k)$ an $(m+2)$-uniform disjoint chain of sets 
or almost disjoint cycle of sets with $|s_1\cap s_k|=1$ and let $(v_1, \ldots, v_{k-1})$ be its corresponding vertex sequence . 
For $s$ to be $r$-alternating it is necessary that vertices $v_1, \ldots, v_{k-1}$ have birth times in consecutive intervals $P^r_{(j-1)_r}, P^r_{(j-2)_r}\ldots, P^r_{(j-k+1)_r}$ for some $j\in [r]$. 
This happens with probability $r (p/r)^{k-1}$. 
Moreover for $i=2, \ldots, k-1$ every vertex from $s_i$ must have birth time in the arc from $t(v_{i})$ to $t(v_{i-1})$, let $\delta_i$ be the length of that arc. 
Additionally for $s$ to be alternating $s_1$ must start in point $t(v_1)$ and end in interval $C^r_{(j)_r} \cup P^r_{(j)_r}$ and $s_k$ must end in point $t(v_{k-1})$ and start in interval $P^r_{(j-k)_r} \cup C^r_{(j-k+1)_r}$. 
Let $\delta_1, \delta_k$ be the lengths of corresponding arcs. 
The probability that vertices of $s_1, \ldots, s_k$, beside $v_1, \ldots, v_{k-1}$,  have birth times in corresponding arcs is smaller than $\prod_{i=1}^k \delta_i^{m}$. 
We know that $\sum_{i=1}^k \delta_i=(k+p)/r$. 
The product is maximized when all arcs have the same length, hence the probability that $s$ is $r$-alternating is at most
\[
	r (p/r)^{k-1} \left( \frac{1+p/k}{r}\right)^{m k}.
\]
For $s$ to be $r$-conflicting $s_1$ must end in interval $C^r_{(j)_r}$, that makes 
$\sum_{i=1}^k \delta_i=k/r$ and analogous maximization of the product gives 
\[
	\prob(s \text{ is $r$-conflicting}) \leq 	r (p/r)^{k-1} \left( \frac{1}{r}\right)^{m k}.
\]
Finally, for $s$ to be complete $r$-conflicting $s_k$ must start in interval $C^r_{
(j-k+1)_r}$. Then $\sum_{i=1}^k \delta_i=(k-p)/r$ and analogously we get
\[
	\prob(s \text{ is complete $r$-conflicting}) \leq 	r (p/r)^{k-1} \left( \frac{1-p/k}{r}\right)^{m k}. 
\]
\end{proof}

\section{Hypergraphs of arithmetic progressions}
\label{sec:vdwr}
In this section we extend the results of Section \ref{sec:simp2} to derive bounds on van der Waerden numbers $W(n,r)$.
For $W,n\in \Nat$ let $H_{W,n}=(V,E)$ be the hypergraph in which $V=[W]$ and $E$ is the set of arithmetic progressions of length $n$ contained in $V$ (i.e. $E=\{(a_0, \ldots, a_{n-1})\in [W]^n : \exists_{d\in [W]} \forall_{i\in[n-2]} a_{i+1}-a_i=d\}$). 
For every $f\in E$ the minimum difference of distinct numbers from $f$ is called \emph{the difference of $f$}.  
The maximum vertex degree of $H_{W,n}$ is at most $W$. 
Clearly $r$-colorability of $H_{W,n}$ is equivalent to $W(n,r) > W$. 
We can not directly generalize Theorem \ref{thm:maxD} since these hypergraphs are not simple. 
However they are simple enough to patch the proof of Theorem \ref{thm:maxD}. 
Almost disjoint cycles need special care since now for an almost disjoint cycle $(s_1, \ldots, s_k)$ the intersection $s_1 \cap s_k$ can be large. 
We start with bounds on the number of such cycles for which $|s_1 \cap s_k|=1 $ and number of such for which $|s_1 \cap s_k|\geq 2$ .

\begin{prop}
\label{prop:almost disjoint cycle H(W,n)1}
	In any $H_{W,n}$ with maximum vertex degree $d$, any fixed vertex is contained in at most $k^2 d (nd)^{k-2} n^4$ almost disjoint cycles $(s_1, \ldots, s_k)$ of length $k$ with $|s_1 \cap s_k| =1$.	
\end{prop}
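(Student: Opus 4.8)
The plan is to mimic the signature-counting argument used in Propositions \ref{prop:disjC} and \ref{prop:adisjC}, adapting it to the fact that $H_{W,n}$ is no longer simple: two edges can intersect in many vertices, but the special structure of arithmetic progressions keeps things under control. The key observation is that an arithmetic progression of length $n$ is determined by any two of its points together with the ratio of indices between them; more usefully, a progression $s$ containing a prescribed vertex $v$ at a prescribed index, and containing another prescribed vertex $v'$ at a prescribed index, is uniquely determined (at most one progression fits). So once we know \emph{which} vertices play the roles of the intersection points and \emph{at which positions} within each edge they sit, each edge of the cycle is pinned down by its two ``link'' vertices and their two indices, exactly as in the simple case — the only genuinely new subtlety is handling the one ``long'' intersection $s_1\cap s_k$.

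First I would fix the vertex $v$ and an almost disjoint cycle $s=(s_1,\dots,s_k)$ with corresponding vertex sequence $(v_1,\dots,v_{k-1})$, and (since $|s_1\cap s_k|=1$ here) let $v_k$ be the unique element of $s_1\cap s_k$, so the cycle is genuinely symmetric. I would then record a signature analogous to the one in Proposition \ref{prop:adisjC}: the index $i\in[k]$ of the first edge of the cycle containing $v$; the index among the chains of length $k-1$ containing $v$ of the sub-chain $(s_{(i)_k},\dots,s_{(i+k-2)_k})$ obtained by deleting the edge ``opposite'' to $v$; and then the indices of the two remaining link vertices within their respective edges so that the last edge $s_{(i+k-2)_k}$ is determined. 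The point is that the $(k-1)$-edge sub-chain is a disjoint chain (cycles are almost disjoint), so by Proposition \ref{prop:disjC} there are at most $d(k-1)(nd)^{k-2}$ of them through $v$; combined with the $k$ choices for $i$ and two index choices each bounded by $n$, we would get on the order of $k\cdot d(k-1)(nd)^{k-2}\cdot n^2$. To reach the stated bound $k^2 d (nd)^{k-2} n^4$ I would be generous: the extra factors of $n^2$ absorb any slack coming from the need, in the non-simple setting, to also specify the index of $v_k$ within $s_1$ and within $s_k$ separately (since a long intersection is ruled out, $|s_1\cap s_k|=1$, but we still must pin the position of that single shared vertex in \emph{both} edges rather than relying on a symmetric clockwise structure), and $k^2$ rather than $k(k-1)$ is a harmless rounding. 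I would then verify, as before, that distinct cycles through $v$ receive distinct signatures: if two cycles put $v$ in edges at different cyclic positions they differ in the first entry; otherwise if the deleted-edge sub-chains differ they differ in the second entry; and if those agree, the two link-vertex indices determine the final edge, hence the whole cycle.

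The main obstacle, and the reason the bound is weaker (larger power of $n$) than in Proposition \ref{prop:adisjC}, is precisely that $H_{W,n}$ is not simple: I cannot invoke the ``symmetry'' of an almost disjoint cycle that was available for simple hypergraphs, where $|s_0\cap s_{k-1}|=1$ forced every cyclic rotation to again be a chain and an almost disjoint cycle, nor can I let a single shared vertex plus one index determine an edge in the clean way a general pair of link vertices does. Here I must be careful that the sub-chain I extract by deleting one edge is still disjoint (so Proposition \ref{prop:disjC} genuinely applies) and that the deleted edge is recovered unambiguously from the data recorded; the safety margin of $n^4$ versus $n^2$ is exactly what lets me be careless about whether the ``first vertex/last vertex'' roles and the precise positions of the two endpoints of the deleted edge are tracked individually rather than via a symmetry shortcut. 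Once the injectivity of the signature map is established, the count is immediate: the number of signatures is at most $k\cdot\bigl(d(k-1)(nd)^{k-2}\bigr)\cdot n^2\cdot n^2 \le k^2 d (nd)^{k-2} n^4$, which bounds the number of such cycles through $v$.
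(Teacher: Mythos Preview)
Your approach is essentially the paper's: reuse the signature from Proposition~\ref{prop:adisjC} and pay an extra factor of $n^2$ because any two vertices lie in at most $n^2$ arithmetic progressions of length $n$ (the paper phrases this as ``the old signature is at most $n^2$-to-$1$'' rather than appending entries). One notational slip to fix: the two extra indices you need are the positions of the link vertices $v_{(i-1)_k}$ and $v_{(i-2)_k}$ \emph{within the deleted edge} $s_{(i-1)_k}$, not ``the index of $v_k$ within $s_1$ and within $s_k$'' --- the deleted edge in your rotation is $s_{(i-1)_k}$, not $s_1$ or $s_k$.
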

\begin{proof}
Let us consider signatures defined for almost disjoint cycles in Proposition \ref{prop:adisjC}. 
These signatures are still meaningful in $H_{W,n}$ but they no longer determine a chain uniquely. 
The problem is that hypergraph is not simple so two vertices ($v_{(i-1)_k}$ and $v_{(i+k-2)_k}$) no longer determine the last edge $s_{(i+k-1)_k}$. 
However in any $H_{W,n}$ any two vertices belong to at most $n^2$ edges so every such signature corresponds to at most $n^2$ chains. 
Multiplying the bound  from Proposition \ref{prop:adisjC} by $n^2$ we get $k^2 d (nd)^{k-2} n^4$.
\end{proof}

\begin{prop}
\label{prop:almost disjoint cycle H(W,n)2}
	There exists a polynomial $u(k,n)$ such that in any $H_{W,n}$ with maximum vertex degree $d$, any fixed vertex is contained in at most $(dn)^{k-2} u(n,k)$ almost disjoint chains $(s_1, \ldots, s_k)$ of length $k$ with $|s_1 \cap s_k| \geq 2$.	
\end{prop}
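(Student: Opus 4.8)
The plan is to imitate the signature bookkeeping of Propositions~\ref{prop:adisjC} and~\ref{prop:almost disjoint cycle H(W,n)1}, but to exploit the fact that in $H_{W,n}$ an arithmetic progression is determined by very few parameters, so that large overlaps between $s_1$ and $s_k$ are highly constrained. Concretely, fix a vertex $v$ and an almost disjoint cycle $s=(s_1,\ldots,s_k)$ containing $v$ with $|s_1\cap s_k|\geq 2$. As in Proposition~\ref{prop:adisjC} I would record: the index $i$ of the first edge containing $v$, the identity of the disjoint subchain $(s_{(i)_k},\ldots,s_{(i+k-2)_k})$ of length $k-1$ among such subchains through $v$ (at most $(dn)^{k-2}\cdot\mathrm{poly}(n,k)$ choices by Proposition~\ref{prop:disjC}, after correcting for non-simplicity by a factor $n^{O(k)}$ absorbed into the polynomial), and the index of the gluing vertex $v_{(i-1)_k}$ within $s_i$. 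What is new is that $s_k$ is no longer pinned down by a single vertex of the subchain; instead I must separately encode enough data to recover the last progression $s_k$. Since $s_k\in E$ is an arithmetic progression in $[W]$, it is determined by its first term and its common difference — two integers — but those integers are not a priori bounded by a polynomial in $n$ and $k$, so I need to locate them relative to the already-fixed data.

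The key observation is the hypothesis $|s_1\cap s_k|\geq 2$: the progression $s_k$ shares at least two vertices with $s_1$. Two distinct elements of an arithmetic progression of length $n$ determine the progression up to the choice of which two of its $n$ positions they occupy: if $s_k$ has common difference $d_k$ and two of its terms are $x<y$, then $d_k$ divides $y-x$ and in fact $d_k=(y-x)/\ell$ for some $\ell\in[n-1]$, after which the whole progression is fixed by also choosing the index of $x$ in $s_k$, i.e.\ at most $n^2$ possibilities for $s_k$ once the two shared vertices are named. But the two shared vertices lie in $s_1$, which is itself the first edge of the subchain already encoded in the signature; hence I only need to add to the signature the pair of indices (within $s_1$) of two vertices of $s_1\cap s_k$, together with one of the at most $n^2$ refinements, to determine $s_k$. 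Likewise the gluing vertex $v_{(i+k-2)_k}$ between $s_{(i+k-2)_k}$ and $s_k$ — if needed to reconstruct the cyclic order — is specified by its index within $s_{(i+k-2)_k}$, one more factor of $n$. Altogether the signature consists of the $(dn)^{k-2}\cdot\mathrm{poly}(n,k)$ data for the subchain, times $O(n^{O(1)})$ extra factors, and different cycles get different signatures; this yields the bound $(dn)^{k-2}u(n,k)$ for a suitable fixed polynomial $u$.

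The main obstacle I anticipate is purely organizational rather than conceptual: making sure that the extra indices I attach really do reconstruct $s_k$ \emph{and} the correct cyclic identification of $s_k$ with the rest of the chain, including correctly handling the degenerate possibility that $s_1\cap s_k$ has size larger than $2$ (which only makes reconstruction easier, since any two of its elements suffice). One should also double-check that the bound on the number of length-$(k-1)$ disjoint subchains through $v$ in a non-simple $H_{W,n}$, obtained from Proposition~\ref{prop:disjC} by inserting the factor $n^2$ per adjacency as in the proof of Proposition~\ref{prop:almost disjoint cycle H(W,n)1}, is still of the form $(dn)^{k-2}\cdot\mathrm{poly}(n,k)$ — it is, since the correction is $n^{O(k)}$ and can be folded into $u(n,k)$. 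Once these points are verified, the counting is straightforward and gives the claimed polynomial $u(n,k)$.
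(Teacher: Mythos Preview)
Your approach has a genuine counting gap. You encode a subchain of length $k-1$ through $v$ and claim this costs at most $(dn)^{k-2}\cdot\mathrm{poly}(n,k)$ choices, but Proposition~\ref{prop:disjC} gives $d(k-1)(nd)^{k-2}$ chains of length $k-1$ through a fixed vertex, which is $(dn)^{k-2}\cdot d(k-1)$. The stray factor $d$ is not bounded by any polynomial in $n$ and $k$ (in the application $d$ is of order $r^{n}/\ln n$), so your signature count is really $(dn)^{k-2}\cdot d\cdot\mathrm{poly}(n,k)$ --- the shape of the bound in Proposition~\ref{prop:almost disjoint cycle H(W,n)1}, not of the present one. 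The entire purpose of isolating the case $|s_1\cap s_k|\ge 2$ is precisely to save this factor of $d$, and that saving is what makes $w_{\acc^{(\ge 2)}}(z_0)$ exponentially small in the proof of Theorem~\ref{thm:Wnr}. A secondary issue: your rotated sequence $(s_{(i)_k},\ldots,s_{(i+k-2)_k})$ need not even be a chain, since when the rotation passes through the join the consecutive pair $s_k,s_1$ intersects in $\ge 2$ vertices; the cyclic symmetry you are borrowing from Proposition~\ref{prop:adisjC} is valid only for simple hypergraphs.

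The paper obtains the saving by encoding only a subchain of length $k-2$ and then reconstructing \emph{two} missing edges from polynomial data, splitting into two cases. If $v\in s_1$ (or symmetrically $v\in s_k$), one records the chain $(s_1,\ldots,s_{k-2})$; then $s_k$ is pinned down by naming two indices in $s_1$ for a pair $a,b\in s_1\cap s_k$ (plus $\le n^2$ refinements), and $s_{k-1}$ is pinned by the vertices $v_{k-2}\in s_{k-2}$ and $v_{k-1}\in s_k$ (again $\le n^2$ refinements). If instead $v\in s_2\cup\cdots\cup s_{k-1}$, one records the middle chain $(s_2,\ldots,s_{k-1})$ together with $v_1\in s_2$ and $v_{k-1}\in s_{k-1}$. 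The key extra idea here is that the common differences $\gamma_1,\gamma_k$ of the progressions $s_1,s_k$ can be recovered from four integers of size $O(n)$: for $a,b\in s_1\cap s_k$ one has $b-a=r_1\gamma_1=r_k\gamma_k$ and $a=v_1+l_1\gamma_1=v_{k-1}+l_k\gamma_k$; recording $(r_1,r_k,l_1,l_k)$ together with the positions of $v_1$ in $s_1$ and of $v_{k-1}$ in $s_k$ determines both $s_1$ and $s_k$ without ever spending a factor of $d$ to choose an edge through $v_1$ or $v_{k-1}$. This simultaneous recovery of two edges from the arithmetic constraints is the step your proposal is missing.
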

\begin{proof}
Let us fix a vertex $v$ in $H_{W,n}$ and 
let $S_c^{(\geq 2)}$ be the set of almost disjoint cycles in which the first and the last edge have at least two  vertices in common. 
%let $S_c^{(\geq 2)}$ by the set of almost disjoint cycles $(s_1, \ldots, s_k)$ (of any length) for which $|s_1 \cap s_k| \geq 2$. 
To bound the number of cycles  $s$ from $S_c^{(\geq 2)}$ of length $k$ containing vertex $v$ we consider two cases.

\textbf{Case 1: $v\in s_1 \cup s_k$.} Suppose that $v\in s_1$ (the other case is symmetric). 
Let $a,b$ be distinct vertices from $s_1 \cap s_k$. 
We assign to $s$ the following signature:
\begin{enumerate}
	\item index of $(s_1, \ldots, s_{k-2})$ among the chains of length $k-2$ containing $v$,
	\item index of $a$ within $s_1$,
	\item index of $b$ within $s_1$,
	\item index of $s_k$ among the edges containing $a,b$,
	\item index of $v_{k-1}$ within $s_k$,
	\item index of $v_{k-2}$ within $s_{k-2}$,
	\item index of $s_{k-1}$ among the edges containing $v_{k-2}, v_{k-1}$.
\end{enumerate}
It is easy to check that different $s\in S_c^{(\geq 2)}$ of length $k$ containing $v$ in $s_1$ have different signatures. 
Only the first entry of a signature is not bounded by a polynomial of $n$ and $k$. 
Using Proposition \ref{prop:disjC} we get that the number of such chains is at most $(dn)^{k-2} u_1(n,k)$ for some polynomial $u_1(n,k)$. 

\textbf{Case 2: $v$ belongs to $s'=(s_2,\ldots, s_{k-1})$.}
Let $a,b\in s_1 \cap s_k$ be distinct vertices. We assign to $s$ the following signature:
\begin{enumerate}
	\item index of $s'$ among the chains of length $k-2$ containing $v$,
	\item index of $v_{k-1}$ within $s_{k-1}$,
	\item index of $v_1$ within $s_2$,
	\item number $r_1$ such that for $\gamma_1$ being the difference of $s_1$ we have $b=a+r_1 \gamma_1$
	\item number $r_k$ such that for $\gamma_r$ being the difference of $s_k$ we have $b=a+r_k \gamma_k$
	\item number $l_1$ such that $a=v_1+ l_1 \gamma_1$,
	\item number $l_k$ such that $a=v_{k-1}+ l_k\gamma_k$,
	\item position of $v_1$ within arithmetic progression $s_1$,
	\item position of $v_{k-1}$  within arithmetic progression $s_k$.
\end{enumerate}
First three entries of the signature uniquely determine sequence $s'$ and vertices $v_{k-1}$ and $v_1$ (recall that these vertices are numbers). 
From entries (4)-(7) we can decode differences $\gamma_1, \gamma_k$ of sequences $s_1,s_k$. 
Indeed, (4) and (5) specify proportion $\gamma_1/\gamma_k =r_k/r_1$. 
Then from (6) and (7) we derive equation $v_1+ l_1\gamma_1 = v_{k-1}+ l_k\gamma_k$ from which we can obtain $\gamma_1, \gamma_k$ since we already know their proportion. 
Then we know that $s_1$ have difference $\delta_1$ and vertex $v_1$ is in the progression on the position specified by (8), hence $s_1$ is determined. 
Analogously $s_k$ is determined by $\delta_k, v_{k-1}$ and (9). 
Once again all but the first entry of the signature are bounded by some polynomial of $n,k$. 
Hence using Proposition \ref{prop:disjC} we get that the number of such chains  is at most $(dn)^{k-2} u_2(n,k)$ for some polynomial $u_2(n,k)$. 
\end{proof}

%\begin{theorem}
%	Let $W(n,r)$ be the smallest integer such that if we partition integers $\{1, \ldots, W(n,r)\}$ into $r$ sets, then at least one of them contains an arithmetic progression of length $n$. Then,
%	\[
%		W(n,r) = \Omega\left(\frac{r^n}{\ln(n)}\right)
%	\]
%\end{theorem}
\begin{proof}[Proof of Theorem \ref{thm:Wnr}]
Let us consider hypergraph of arithmetic progressions $H_{W,n}$ and let $d$ be the maximum vertex degree of $H_{W,n}$ (note that $d\leq W$). 
By Proposition \ref{prop:ccr}, in order to prove that $H_{W,n}$ is $r$-colorable it is enough to prove  that there exists an injective birth time assignment function which makes no edge $r$-degenerate and for which there are no complete $r$-conflicting chains. 
We will show that there exists function $t:V \to [0,1)$ for which no edge is $r$-degenerate, there are no disjoint complete $r$-conflicting chains and there are no $r$-alternating almost disjoint cycles. 
Let us choose function $t$ uniformly at random. 
Just like in the proof of Theorem \ref{thm:maxD} let $\dg_f,\ccc_s,\acc_c$ be events that respectively edge $f$ is $r$-degenerate, disjoint chain $s$ is complete $r$-conflicting and that almost disjoint cycle $c$ is $r$-alternating.
Analogously we define local polynomials $w_\dg(z), w_\ccc(z), w_\acc(z)$ corresponding to these events and analyse  these polynomials separately.
\\
\textbf{Events $\dg$.}  	
	We have $\prob(\dg_f)= p^n$ so $w_\dg(z)= p^n d z^n$. 
\\	
\textbf{Events $\ccc$.} By Proposition \ref{prop:disjC} the number of chains of length $k$ containing specific vertex  is at most $dk (nd)^{k-1}$.
For $s$ being a disjoint chain of length $k$ by Lemma \ref{lem:probC} we have:
\[
	\prob(\ccc_s)\leq r (p/r)^{k-1} \left( \frac{1-p/k}{r}\right)^{k(n-2)}.
\]
Hence
\begin{align}
\label{eq:wccr}
	w_{\ccc}(z)& = \sum_{k=1}^{K} 
	r (p/r)^{k-1} \left(\frac{1-p/k}{r}\right)^{k(n-2)}
	(dk(nd)^{k-1}) z^{kn} \nonumber \\
	& = \frac{r^2}{pn}
	\sum_{k=1}^{K} k  \left(\frac{dpn}{r^{n-1}}\right)^{k} (1-p/k)^{k(n-2)} z^{kn}.
\end{align}
\textbf{Events $\acc$.}  
We split these events into two classes. Let $S_c^{(1)}$ be the set of almost disjoint cycles in which the first and the last edge have exactly one vertex in common. 
Let $S_c^{(\geq 2)}$ be the set of almost disjoint cycles in which the first and the last edge have at least two  vertices in common. 
We treat events $\acc_s$ for $s\in S_c^{(1)}$ and $s\in S_c^{(\geq 2)}$ separately.
\\
\textbf{Case 1: $\acc_s$ for $s\in S_c^{(1)}$} 
 By Proposition \ref{prop:almost disjoint cycle H(W,n)1} there are at most $k^2 d  (nd)^{k-2} n^4$ almost disjoint cycles from $S_c^{(1)}$ of length $k$ containing any specific vertex. 
 Let $k$ be the length of $s$, then by Lemma \ref{lem:probC}
\[
	\prob(\acc_s) \leq r (p/r)^{k-1} \left(\frac{1+p/k}{r}\right)^{k(n-2)}.
\]
Therefore 
\begin{align}
	w_{\acc^{(1)}}(z)& = \sum_{k=3}^{K} 
	r (p/r)^{k-1} \left(\frac{1+p/k}{r}\right)^{k(n-2)} 
	(d^{k-1} n^{k+2} k^2) z^{kn} \nonumber \\
	&= \frac{r^2 n^3}{r^{n-1}} \sum_{k=3}^{K} k^2 \left(\frac{dpn}{r^{n-1}} \right)^{k-1} z^{kn}.
\end{align}
\\
\textbf{Case 2: $\acc_s$ for $s\in S_c^{(\geq 2)}$} 
There are at most $(dn)^{k-2} u(n,k)$ chains of length $k$ in $S_c^{(\geq 2)}$ containing fixed vertex (Proposition \ref{prop:almost disjoint cycle H(W,n)2}). 
Probability that an almost  disjoint cycle $(s_1, \ldots, s_k)$ is $r$-alternating is smaller than the probability that $(s_1, \ldots, s_{k-1})$ is $r$-alternating. 
The latter chain is disjoint, hence by Lemma \ref{lem:probC} the probability is at most 
\[
	r (p/r)^{k-2} \left( \frac{1+p/(k-1)}{r}\right)^{(n-2)(k-1)}.
\]
Every such chain of length $k$ contains no more than $nk$ distinct vertices. Hence the contribution to the local polynomial from the events of this type is at most
\begin{align*}
	w_{\acc^{(\geq 2)}}(z)& = \sum_{k=3}^{K} 
	r (p/r)^{k-2} \left(\frac{1+p/(k-1)}{r}\right)^{(k-1)(n-2)} 
	(dn)^{k-2} u(n,k) z^{kn} \nonumber \\
	&= \frac{1}{r^{n-3}} 
	\sum_{k=3}^{K} u(n,k) 
	\left(\frac{dnp}{r^{n-1}} \right)^{k-2} 
	\left(1+\frac{p}{k-1}\right)^{(k-1)(n-2)}
	z^{kn}.
\end{align*}

Evaluating local polynomials at $p=\frac{\ln(n)}{n}, \tau_0=1/n, z_0= \frac{1}{1-\tau_0}$ and $d = \frac{1}{2e\ln(n)} r^{n-1}$ we get
\begin{align*}
	w_\dg(z_0) &= o(1/n), \\
	w_\ccc(z_0) &\sim \frac{r^2}{\ln(n)} 
		\sum_{k=1}^{K} k \left(\frac{1}{2 e }\right)^k (1/n) e^k
\\
	&= \frac{r^2}{n \ln (n)} \sum_{k=1}^{K} k 2^{-k} = o(1/n),\\
   	w_{\acc^{(1)}}(z_0)	&\sim \frac{r^2 n^3 }{r^{n-1}} 
   	\sum_{r=1}^{K} k^2 \left(\frac{1}{2 e } \right)^{k-1} e^k \\
   	&= \frac{e\; r^2n^3 }{r^{n-1}} \sum_{k=1}^{K} k^2 2^{-k+1} = o(1/n), \\
   	w_{\acc^{(\geq 2 )}}(z_0)	&\sim \frac{1}{r^{n-3}} 
   	\sum_{r=1}^{K} u(n,k)  \left(\frac{1}{2 e} \right)^{k-1} e^k \\
   	&= \frac{e}{r^{n-3}} \sum_{k=1}^{K} u(n,k) 2^{-k+1} = o(1/n).
\end{align*}
Therefore for all large enough $n$ we get $w(z_0)\leq \tau_0$ hence by Lemma \ref{lm:local} all events of types $\dg, \ccc$ and $\acc$ can be simultaneously avoided. 
It implies that for all large enough $n$ graph $H_{\frac{r^{n-1}}{2e\ln(n)} ,n}$ is $r$-colorable.

\end{proof}

\section{$b$-simple hypergraphs}
\label{sec:bsimp}
%Hypergraph is called $b$-simple if intersection of every two different edges contains at most $b$ elements. 
%As a consequence in any $b$-simple hypergraph every set of vertices of size $b+1$ is contained in at most one edge. 
%Kostochka and Kumbhat \cite{KosKum2009} define $f(n,r,b)$ as the smallest possible number of edges in an $n$-uniform $b$-simple hypergraph that is not $r$-colorable. 
%For every positive $\epsilon$ and fixed $b$ and $r$ they proved that $f(n,r,b) = \Omega(\frac{r^{n(1+1/b)}}{n^\epsilon})$. 
%The main ingredient of their proof was a result on maximum edge-degree of $n$-uniform $b$-simple hypergraphs that is not $t$-colorable. 
%In order to improve their lower bound for $f(n,r,b)$ we derive a new bound on  maximum vertex degree in such graphs.
%the bound on maximum vertex degree in $n$-uniform $b$-simple hypergraphs that is not $t$-colorable.

We are going to analyse the behaviour of $\MGCr$ on $b$-simple $n$-uniform hypergraphs.
Proposition \ref{prop:disjC} is still valid in our case. 
In order to generalize Proposition \ref{prop:adisjC} to $b$-simple hypergraph we generalize the definition of a disjoint chain.
We say that permutation $\pi:[k]\to[k]$ is \emph{connected} if for every $i=1, \ldots, k$, the set $\{\pi(1), \ldots, \pi(i) \}$ consists of $i$ consecutive integers.
A chain $s=(s_1, \ldots, s_k)$ is \emph{$b$-disjoint} if there exists a connected permutation of its edges $\pi$ such that for every $i=2,\ldots,k$ edge  $s_{\pi(i)}$ contains at least $n-b$ vertices which are not contained in the \emph{previous edges} $s_{\pi(1)}, \ldots, s_{\pi(i-1)}$.
For chain $s=(s_1, \ldots, s_k)$ with corresponding vertex sequence $(v_1, \ldots, v_{k-1})$ any chain of sets $(s'_1, \ldots, s'_k)$ such that $s'_i \subset s_i$ is called a \emph{subchain} of $s$.

%Chain $s$ is \emph{almost $b$-disjoint} if $(s_1, \ldots, s_{k-1})$  and $(s_2, \ldots, s_{k})$ are $b$-disjoint but $s$ is not. 
%As a consequence in an almost $b$-disjoint cycle $(s_1, \ldots, s_k)$ we have $|s_k \cap \bigcup_{i=1}^{k-1} s_i | \geq b+1$ and $|s_1 \cap \bigcup_{i=2}^{k} s_i | \geq b+1$.

\begin{prop}
\label{prop:nbdisjC}
	For every $b\in \Nat$, there exists a polynomial $u(z)$ such that in any $b$-simple $n$-uniform hypergraph with maximum vertex degree $d$, any vertex belongs to at most $d^{k-1} n^{k+b} u(k) $ chains of length $k\geq 3$ that are not $b$-disjoint.
\end{prop}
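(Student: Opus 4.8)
The plan is to mimic the counting argument of Proposition~\ref{prop:adisjC}, but now keeping track of the place where the chain fails to be $b$-disjoint. Fix a vertex $v$ and let $s=(s_1,\ldots,s_k)$ be a chain containing $v$ that is \emph{not} $b$-disjoint, with corresponding vertex sequence $(v_1,\ldots,v_{k-1})$. Since $s$ is not $b$-disjoint, there is a smallest index $j$ (with $2\le j\le k$) such that no connected permutation introducing $s_1,\ldots,s_j$ in order can add $s_j$ with at least $n-b$ new vertices; more usefully, one extracts from $s$ a short subchain of consecutive edges that already witnesses the failure — concretely a subsegment $(s_a,s_{a+1},\ldots,s_{a+\ell})$ of bounded length $\ell$ together with two edges $s_a,s_{a+\ell}$ whose intersection (or the union of intersections along the segment) uses up more than $b$ of the vertices of one of these edges. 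The key structural point, analogous to the observation preceding Proposition~\ref{prop:nbdisjC}, is that a failure of $b$-disjointness is \emph{local}: it is caused by at most $b+1$ edges that overlap too much, so we can isolate a bounded-length ``bad gadget'' inside the chain.

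The counting then proceeds as in Propositions~\ref{prop:adisjC} and~\ref{prop:almost disjoint cycle H(W,n)1}. First I would record, as part of the signature, the positions inside $[k]$ of the edges of the bad gadget (a bounded number of choices from $[k]$, hence polynomial in $k$), and the indices within those edges of the $O(b)$ shared vertices (each a number from $[n]$, hence a factor $n^{O(b)}$). Removing the bad gadget leaves a genuine $b$-disjoint — in fact ordinary disjoint, after the reduction — chain (or a small collection of chains whose total length is $k-O(1)$) on the remaining edges, and each such chain containing $v$ or containing a prescribed anchor vertex can be counted by Proposition~\ref{prop:disjC}, contributing at most $d^{k-O(1)}(nd)^{k-O(1)}$; regrouping the $d$'s and $n$'s and absorbing the bounded extra vertices into the $n^{k+b}$ factor, and absorbing the $k$-polynomial bookkeeping into $u(k)$, yields the claimed bound $d^{k-1}n^{k+b}u(k)$. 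One must check, exactly as in the earlier propositions, that distinct non-$b$-disjoint chains of length $k$ through $v$ receive distinct signatures: the gadget positions plus the shared-vertex indices pin down the overlapping edges, and the remaining disjoint pieces are pinned down by their Proposition~\ref{prop:disjC} indices relative to already-reconstructed anchor vertices.

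The main obstacle I anticipate is making precise the ``bounded-length bad gadget'' — i.e.\ showing that whenever a chain is not $b$-disjoint, the obstruction can always be localized to $O(b)$ consecutive edges sharing $O(b)$ vertices, so that the rest genuinely decomposes into disjoint subchains amenable to Proposition~\ref{prop:disjC}. This requires a careful induction on the greedy/connected-permutation process defining $b$-disjointness: one shows that if the greedy peeling of edges ever gets stuck (the next edge contributes fewer than $n-b$ new vertices), the edges responsible for ``stealing'' its vertices form a connected sub-segment of bounded size, because in a $b$-simple hypergraph any $b+1$ vertices lie in at most one edge, so the overlap structure cannot be spread too thinly. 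Once this localization lemma is in hand, the signature construction and the counting are routine, paralleling the proofs already given.
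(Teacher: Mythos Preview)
Your proposal hinges on a ``localization lemma'' asserting that a failure of $b$-disjointness can always be witnessed by $O(b)$ \emph{consecutive} edges, but this is false. Take $b=2$ and build a chain $(s_1,\ldots,s_k)$ in which $s_M$ (for $M$ as large as you like) shares one vertex with $s_1$ and one vertex with $s_2$, in addition to the chain vertex $v_{M-1}\in s_{M-1}$. Under the natural permutation $(1,2,\ldots,k)$, when $s_M$ is added it meets the union of earlier edges in at least $3=b+1$ vertices; and any other connected permutation eventually places one of $s_1,s_2,s_M$ after the other two, producing the same overlap. So the obstruction genuinely spans edges at distance $\Theta(M)$, not $O(b)$. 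Your justification (``any $b+1$ vertices lie in at most one edge, so the overlap cannot be spread thinly'') is the wrong direction: $b$-simplicity bounds pairwise intersections, but does nothing to prevent one edge from picking up a single vertex from each of many distant edges.

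The paper's argument sidesteps localization entirely. Instead of hunting for a short bad segment, it fixes the \emph{specific} connected permutation $\pi=(i,i-1,\ldots,1,i+1,\ldots,k)$ starting from the edge $s_i$ containing $v$; since the chain is not $b$-disjoint, this particular $\pi$ must fail at some step $j$, meaning $s_{\pi(j)}$ has $b+1$ vertices inside the union $S_j$ of the already-placed edges. Now the key point is not that the overlap is local but that \emph{one edge is determined for free}: by $b$-simplicity, specifying a $(b+1)$-subset of $S_j$ pins down $s_{\pi(j)}$ uniquely. The signature therefore records (i) the chain on one side of $s_{\pi(j)}$, a chain through $v$ counted by Proposition~\ref{prop:disjC}; (ii) which $(b+1)$-subset of the $\le nk$ vertices of $S_j$ recovers $s_{\pi(j)}$; (iii) one vertex of $s_{\pi(j)}$ to anchor the remaining chain on the other side, again counted by Proposition~\ref{prop:disjC}. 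The saving of exactly one factor of $d$ (replaced by $n^{b+1}$ times a polynomial in $k$) is what gives $d^{k-1}n^{k+b}u(k)$. No bounded-gadget extraction is needed.
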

\begin{proof}
	Let us fix vertex $v$ and let $s=(s_1, \ldots, s_{k})$ be a chain containing $v$ which is not $b$-disjoint. Suppose that $v\in s_i$ and let $\pi= (i,i-1, \ldots, 1, i+1, \ldots,k)$. 
Chain $s$ is not $b$-disjoint therefore there exists $j$ such that $s_{\pi(j)}$ contains at least $b+1$ elements of edges $s_{\pi(1)}, \ldots, s_{\pi(j-1)}$. Note that $\pi(j) \neq i$. We consider two cases:
\\
\textbf{Case 1. $\pi(j) < i$}. Let $S_j=\bigcup_{l=\pi(j)+1}^k s_l $. With every such $s$ we associate the following signature:
\begin{enumerate}
	\item index $i$ of the first edge containing $v$ within $s$,
	\item the smallest number $j$ such that $s_{\pi(j)}$ contains at least $b+1$ elements of edges $s_{\pi(1)}, \ldots, s_{\pi(j-1)}$,
	\item index of the chain $s'=(s_{\pi(j)+1}, \ldots, s_k)$ among the chains of length $k-\pi(j)$ containing $v$,
	\item index of the set of first $b+1$ elements of $s_{\pi(j)}\cap S_j$  among $(b+1)$-subsets of $S_j$, 
	\item index of the first vertex $w \in s_{\pi(j)} \cap s_{\pi(j)-1}$ among the vertices of $s_{\pi(j)}$,
	\item index of the chain $s''=(s_1, \ldots, s_{\pi(j)-1})$ among the chains of length $\pi(j)-1$ containing vertex $w$.
\end{enumerate}	
First entry determines permutation $\pi$. 
Second the lengths of chains $s'$ and $s''$. Length of $s'$ together with third entry determine $s'$. 
Since hypergaph is $b$-simple, fourth entry determine uniquely $s_{\pi(j)}$ (note that intersection  $s_{\pi(j)}\cap S_j$ contains at least $b+1$ elements because $\bigcup_{l=1}^{j-1} s_{\pi(l)} \subset  S_j$). 
The length of $s'$ is already known so  fifth and sixth entries determine chain $s'$. 
It shows that different chains $s$ for which $\pi(j) < i$ have different signatures. 
The number of different signatures of this type is smaller than $d^{k-1} n^{k+b} u_1(k)$ for some polynomial $u_1(k)$.

\textbf{Case 2.  $\pi(j) > i$} Analogous reasoning gives analogous bound $d^{k-1} n^{k+b} u_2(k)$ for some polynomial $u_2(k)$.

We take $u(k)= u_1(k)+u_2(k)$ and the proposition follows. 
\end{proof}

\begin{prop}
\label{prop:bdisjProb}
Let $s=(s_1, \ldots,s_k)$ be an $n$-uniform $b$-disjoint chain of edges with $n-b-k-2>k$.
For fixed $p\in (0,1)$, if the birth time assignment is chosen uniformly at random, then
\begin{align*}
	\prob(\text{$s$ is $r$-conflicting}) 
	& \leq r (p/r)^{k-1} \left( \frac{1}{r} \right)^{(n-b-1)k} (1+p)^{k^2},\\
	\prob(\text{$s$ is complete $r$-conflicting}) 
	& \leq r (p/r)^{k-1} \left( \frac{1-p/k}{r} \right)^{(n-b-1)k} \left( \frac{1+p}{1-p/k} \right)^{k^2}.\\
\end{align*}
\end{prop}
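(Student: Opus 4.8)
The plan is to mimic the proof of Lemma~\ref{lem:probC} almost line for line; the only genuinely new point is that in a $b$-disjoint chain two non-consecutive edges may share up to $b$ vertices, so one can no longer treat the $n-2$ non-separating vertices of a given edge as that many \emph{independent} constraints. I would get around this by locating, inside each edge, a block of $\ge n-b-1$ vertices that are pairwise disjoint across the edges of the chain.

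Let $(v_1,\dots,v_{k-1})$ be the corresponding vertex sequence, $s_i\cap s_{i+1}=\{v_i\}$. Exactly as in Lemma~\ref{lem:probC}, for $s$ to be $r$-conflicting it is necessary that $v_1,\dots,v_{k-1}$ land in consecutive intervals $P_{(j-1)_r},\dots,P_{(j-k+1)_r}$ for some $j$, which has probability $r(p/r)^{k-1}$, and that --- once such a placement is fixed --- every vertex of each $s_i$ lie in a certain arc $c_i$ determined by the placement of the $v$'s: for $2\le i\le k-1$ the arc clockwise from $t(v_i)$ to $t(v_{i-1})$, for $i=1$ a suitable arc from $t(v_1)$ into $C$, and for $i=k$ a suitable arc ending at $t(v_{k-1})$. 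The telescoping estimate of Lemma~\ref{lem:probC} carries over verbatim and gives $\sum_i|c_i|\le k/r$ in the $r$-conflicting case and $\sum_i|c_i|\le(k-p)/r$ in the complete case, while trivially $0\le|c_i|\le1$.

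Next I would bring in $b$-disjointness through a connected permutation $\pi$ of $[k]$ witnessing it. For each $\ell$ let $W_\ell$ be the set of vertices of $s_{\pi(\ell)}$ lying outside $s_{\pi(1)}\cup\dots\cup s_{\pi(\ell-1)}$ that are not among $v_1,\dots,v_{k-1}$. The sets $W_\ell$ are pairwise disjoint, $W_\ell\subseteq s_{\pi(\ell)}$, and $|W_\ell|\ge n-b-1$: for $\ell=1$ all $n$ vertices of $s_{\pi(1)}$ are ``new'' and at most two are $v_i$'s, so $|W_1|\ge n-2$; for $\ell\ge2$ connectedness of $\pi$ forces $s_{\pi(\ell)}$ to be adjacent in the chain to an already-listed edge, so the separating vertex they share is not new, whence at most one of the (at most two) $v_i$'s lying in $s_{\pi(\ell)}$ is new and $|W_\ell|\ge(n-b)-1$. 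Conditionally on the placement of $v_1,\dots,v_{k-1}$ the remaining birth times are independent and uniform, and ``$s$ is $r$-conflicting'' forces every vertex of $W_\ell$ into $c_{\pi(\ell)}$; since the $W_\ell$ are disjoint these events are independent, so the conditional probability is at most $\prod_\ell|c_{\pi(\ell)}|^{|W_\ell|}\le\prod_i|c_i|^{\,n-b-1}$ (using $|c_i|\le1$). By AM--GM this is at most $\big(\tfrac1k\sum_i|c_i|\big)^{(n-b-1)k}$, i.e.\ $(1/r)^{(n-b-1)k}$ in the $r$-conflicting case and $\big((1-p/k)/r\big)^{(n-b-1)k}$ in the complete case. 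Multiplying by the factor $r(p/r)^{k-1}$ yields bounds already at most the asserted ones; the extra $(1+p)^{k^2}$ and $\big((1+p)/(1-p/k)\big)^{k^2}$ in the statement are $\ge1$ and just absorb slack, and the hypothesis $n-b-k-2>k$ is (much) more than enough --- it guarantees in particular that the exponents $n-b-1$ above are positive.

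The step I expect to be the crux is the combinatorial claim $|W_\ell|\ge n-b-1$, i.e.\ that at most one newly appearing vertex of each edge can be one of the separating vertices $v_i$. This is exactly where connectedness of the permutation witnessing $b$-disjointness is used, and it requires a little care at the two end edges $s_1,s_k$ (which contain only one $v_i$) and at the first edge of the permutation (which in fact contributes more disjoint witnesses than the generic bound).
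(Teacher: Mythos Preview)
Your argument is correct and in fact proves a slightly sharper inequality than the one stated: the factors $(1+p)^{k^2}$ and $\bigl((1+p)/(1-p/k)\bigr)^{k^2}$ never appear in your derivation, and you correctly note that they are $\ge 1$ and can be appended for free. Your combinatorial claim $|W_\ell|\ge n-b-1$ is sound; the connectedness of $\pi$ is exactly what guarantees that for $\ell\ge 2$ one of the (at most two) boundary vertices of $s_{\pi(\ell)}$ already lies in a previously listed edge and is therefore not ``new''.

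The paper's route is different. It first extracts from $s$ a genuine $(n-b-k+1)$-uniform \emph{disjoint} subchain $s'''$ (by choosing pairwise disjoint subsets $s'_i\subset s_i$ of size $n-b$, then massaging boundary vertices in and out, and finally trimming to equal size), applies Lemma~\ref{lem:probC} to $s'''$, and handles the at least $k^2+b-1$ vertices of $s$ left outside $s'''$ by the crude bound $((1+p)/r)$ per vertex. Those leftover vertices are precisely the source of the $(1+p)^{k^2}$-type factors in the statement. Your approach sidesteps this entirely: by allowing the witness sets $W_\ell$ to have unequal sizes $\ge n-b-1$ and applying AM--GM directly to $\prod_i |c_i|^{\,n-b-1}$, you avoid any trimming step and any leftover set. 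The paper's construction has the small advantage of reducing literally to Lemma~\ref{lem:probC}, while yours is more economical and yields a cleaner constant.
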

\begin{proof}
Let $(v_1, \ldots, v_{k-1})$ be the vertex sequence corresponding to chain $s$.
Vertices from this sequence are called \emph{boundary} vertices.
Definition of being $b$-disjoint implies that we can assign to each edge $s_i$ its subset $s'_i$ of size $n-b$ in such a way that sets $s'_1, \ldots, s'_k$ are pairwise disjoint.
We are going to transform sequence $(s'_1, \ldots, s'_k)$ into a disjoint chain of sets.
%We start with empty set $R$. 
For every $i=1,\ldots,k-1$ we perform the following steps:
\begin{enumerate}
	\item If $v_i$ belongs to some $s'_j$ for $j\notin \{i,i+1\}$, then remove $v_i$  form $s'_j$.
	\item If $v_i \notin s'_i$ then add $v_i$ to $s'_{i}$ and remove some not boundary vertex from $s'_{i}$.
	\item Analogously when $v_i \notin s'_{i+1}$.
\end{enumerate}
The resulting sequence $s''=(s''_1, \ldots, s''_{k})$ is a disjoint chain of sets.
The sequence is not necessarily uniform since step (1) can decrease the size of some set $s'_j$ by one. 
However, since there are only $k-1$ boundary vertices, each set $s'_i$ is decreased by one at most $k-1$ times.
In particular each set of $s''$ has at least $n-b-k+1$ elements. 
Therefore $s''$ has $(n-b-k+1)$-uniform disjoint subchain $s'''$.
Let $T$ be the set of vertices which belong to $s$ but not to $s'''$.
The original chain $s$ contains at least $k(n-b)+b$ vertices (edge $s_{\pi(1)}$ introduces $n$ new vertices and each next edge at least $n-b$).
Constructed $(n-b-k+1)$-uniform disjoint subchain $s'''$ contains exactly $(n-b-k)k+1$ vertices.
Therefore the size of $T$ equals at least $k^2-1+b$.
In this way with each $n$-uniform $b$-disjoint chain of edges $s$ of length $k$ we associate a pair $(s''',T)$ such that, $s'''$ is $(n-b-k+1)$-uniform disjoint subchain of $s$, and $T$ is a set of at least $k^2-1+b$ vertices of $s$ that does not belong to $s'''$.

Whenever $s$ is (complete) $r$-conflicting then $s'''$ is (complete) $r$-conflicting as well.
Suppose that $s'''$ is (complete) $r$-conflicting.
In order for $s$ to be (complete) $r$-conflicting, the birth time of each vertex $v$ of $T$ must belong to some specific interval of length at most $(1+p)/r$.
Indeed, let $v\in s_i\cap T$, and suppose that the birth times of $s'''_i$ are contained in $P_{(j-1)_r} \cup C_j \cup P_{j}$.
Then, the birth time of $v$ must belong to that interval as well, since otherwise edge $s_i$ would be easy.
Altogether we get
\begin{align*}
	\prob(\text{$s$ is $r$-conflicting}) 
	& \leq \prob(\text{$s'''$ is $r$-conflicting}) 
		\cdot \left( \frac{1+p}{r} \right)^{k^2-1+b},\\
	\prob(\text{$s$ is complete $r$-conflicting}) 
	& \leq \prob(\text{$s'''$ is complete $r$-conflicting})
	\cdot \left( \frac{1+p}{r} \right)^{k^2-1+b},
\end{align*}
and the proposition follows from Lemma \ref{lem:probC}.
\end{proof}
%A simple consequence of the definition is that any $b$-disjoint chain $(s_1, \ldots, s_k)$ has an $(n-b)$-uniform disjoint subchain. 

\begin{proof}[Proof of Theorem \ref{thm:Dbnr}] Let $(V,E)$ be a $b$-simple $n$-uniform  hypergraph with maximum vertex degree $d$. 
We will prove that if  $n$ is large enough and $d$ is appropriately bounded, then there is an injective function $t:V\to [0,1)$ that makes no edge $r$-degenerate and for which there are no complete $r$-conflicting chains. 
By Proposition \ref{prop:ccr} it implies that such hypergraph is $r$-colorable.
%Clearly every complete conflicting chain is either $b$-disjoint and has length at most $K$ or it contains a  prefix of length $K$ which is a conflicting $b$-disjoint chain.or a subchain which is an alternating almost $b$-disjoint cycle of length at most $K$. 
Precisely we prove that there exists an injective $t:V\to [0,1)$ and positive $K$ such that there are no degenerate edges, no complete conflicting chains of length at most $K$ and no conflicting  chains of length $K$. 
For a randomly chosen $t$ we consider following types of events
\begin{enumerate}
	\item for $f\in E$ let $\dg_f$ be the event that $f$ is $r$-degenerate,
	\item for $b$-disjoint chain $s$ of length smaller than $K$ let $\dcc_s$ be the event that $s$ is complete $r$-conflicting,
	\item for chain $s$ of length smaller than  $K$ which is not $b$-disjoint let $\ncc_s$ be the event that $s$ is complete $r$-conflicting,
	\item for $b$-disjoint chain $s$ of length $K$ let $\idcc_s$ be the event that $s$ is $r$-conflicting,
	\item for chain $s$ of length $K$ which is not $b$-disjoint let $\incc_s$ be the event that $s$ is  $r$-conflicting.
\end{enumerate}
Clearly avoiding these events is sufficient to avoid complete conflicting chains.
We are going to apply Lemma \ref{lm:local} to prove that all these events are avoided with positive probability when birth time assignment function is chosen uniformly at random.
Just like in the previous proofs we analyse separately local polynomials corresponding to these events. 
We choose parameters $p=\ln(n)/n$, $\tau_0= 1/n$ and $K=\lfloor \ln(n) \rfloor$. 
For notational convenience we also put $z_0= \frac{1}{1-\tau_0}$.
\\
\textbf{Events $\dg$.}  	
	We have $\prob(\dg_f)= p^n$ so $w_\dg(z)= p^n d z^n$ and $w_\dg(z_0)\sim (\ln(n)/n)^n d e$.
\\
\textbf{Events $\dcc$.} By Proposition \ref{prop:disjC}, the number of chains of length $k < K$ is at most $dk (nd)^{k-1}$.
%Let $s$ be $b$-disjoint chain of length $k< K$.
%Chain $s$ is $b$-disjoint therefore it has $(n-b)$-uniform subchain $s'$. 
%If $s$ is complete conflicting then also $s'$ is complete conflicting. 
Hence by Proposition \ref{prop:bdisjProb} we get
\[
	\prob(\dcc_s) \leq r (p/r)^{k-1} \left(\frac{1-p/k}{r}\right)^{(n-b-1)k} \left(\frac{1+p}{1-p/k}\right)^{k^2}
\]
and, since $\left(\frac{1+p}{1-p/k}\right)^{k^2}<6$ , the contribution of this type of events to the local polynomial is at most
\begin{align*}
	w_\dcc(z)&= 6\sum_{k=1}^{K-1} 
	r (p/r)^{k-1} \left(\frac{1-p/k}{r}\right)^{(n-b-1)k}
	dk (nd)^{k-1}
	z^{nk}	 \\
	&= 6 r^2 \sum_{k=1}^{K-1} 
	k (pn)^{k-1}  
	\left(\frac{d}{r^{n-b}}\right)^k
	(1-p/k)^{(n-b-1)k} z^{nk}.
\end{align*}
Plugging in values $p$ and $z_0$ we get
\[
	w_\dcc(z_0)\sim \frac{6r^2}{n \ln(n)} \sum_{k=1}^{K-1} 
	k 
	\left(\frac{d e \ln(n)}{r^{n-b}}\right)^k.
\]
\\
\textbf{Events $\ncc$.}
Let $s=(s_1, \ldots, s_k)$ be a chain of length $k < K$ which is not $b$-disjoint and let $(v_1, \ldots, v_{k-1})$ be the corresponding sequence of vertices.
Since the hypergraph is $b$-simple every edge $s_i$ contains at least $n-(K-1)b$ vertices which does not belong to other edges of the chain. 
Therefore there exists  a disjoint $(n-Kb)$-uniform subchain $s'$ of $s$. Lemma \ref{lem:probC} gives
\[
	\prob(\ncc_s) \leq r (p/r)^{k-1} (1/r)^{(n-Kb)k}.
\]
By Proposition \ref{prop:nbdisjC}, for some polynomial $u(k)$,  each vertex belongs to at most $d^{k-1}n^{k+b} u(k)$ chains of length $k$ which are not $b$-disjoint. 
Therefore the contribution to the local polynomial from this type of events does not exceed
\begin{align*}
	w_\ncc(z) &= \sum_{k=3}^{K-1}  
	r (p/r)^{k-1} (1/r)^{(n-Kb)k}
	d^{k-1}n^{k+b} u(k)
	z^{nk} \\
	&=
	\frac{r^2 n^{b+1}}{d} 
	\sum_{k=3}^{K-1}  
	u(k) (pn)^{k-1}
	\left( \frac{d}{r^{n-b}} \right)^k
	(1/r)^{(b(1-K)+1)k}
	z^{nk}.
\end{align*}	
For the chosen values of $p$ and $z_0$ we get
\begin{align*}
	w_\ncc(z_0) &\sim 
	\frac{r^2 n^{b+1}}{d \ln(n)} 
	\sum_{k=3}^{K-1}  
	u(k) 
	\left( \frac{d e \ln(n)}{r^{n-b}} \right)^k
	(1/r)^{(b(1-K)+1)k} \\
	&\leq 
	\frac{r^{(K-1) (b K-2)} n^{b+1}}{d \ln(n)} 
	\sum_{k=3}^{K-1}  
	u(k) 
	\left( \frac{d e \ln(n)}{r^{n-b}} \right)^k.
\end{align*}	
\\
\textbf{Events $\idcc$.}
Let $s=(s_1, \ldots, s_k)$ be a $b$-disjoint chain of length $K$. 
Considerations analogous to the case of events $\dcc$ but using the bound from Proposition \ref{prop:bdisjProb} for conflicting chains gives
\[
	\prob(\idcc_s)\leq r (p/r)^{K-1} (1/r)^{(n-b-1)K}(1+p)^{K^2}.
\]
Together with bounds from Proposition \ref{prop:disjC}, and by the fact that $(1+p)^{K^2}<4$, we obtain that the contribution to the local polynomial is at most
\begin{align*}
	w_\idcc(z) &= 4 r (p/r)^{K-1} (1/r)^{(n-b-1)K} 
				dK (nd)^{K-1}
				z^{nK} \\
			& =
				4 K r^2 (pn)^{K-1} 
				\left(\frac{d}{r^{n-b}}\right)^{K}
				z^{nK}.
\end{align*}
Evaluating in chosen $p$ and $z_0$ we get
\begin{align*}
	w_\idcc(z_0) &\sim	4\frac{K r^2}{\ln(n)} 
				\left(\frac{d e \ln(n)}{r^{n-b}}\right)^{K}.
\end{align*}
\\
\textbf{Events $\incc$.}
In the analysis of the events $\ncc$ we used upper bound for probability that is valid also for conflicting chains. Hence the same development gives
\begin{align*}
	w_\incc(z) &= 
	\frac{r^2 n^{b+1}}{d} 
	u(K) (pn)^{K-1}
	\left( \frac{d}{r^{n-b}} \right)^K
	(1/r)^{(b(1-K)+1)K}
	z^{nK}.
\end{align*}	
For the chosen values of $p$ and $z_0$ we get
\begin{align*}
	w_\incc(z_0) &\sim 
	\frac{r^{(K-1) (b K-2)} n^{b+1}}{d \ln(n)} 
	u(K) 
	\left( \frac{d e \ln(n)}{r^{n-b}} \right)^K.
\end{align*}	

Finally we choose $d=\frac{r^{n-b}}{(1+\epsilon) e^2 \ln(n)}$. 
Then $w_\dg(z_0), w_\ncc(z_0)$ and $w_\incc(z_0)$ are exponentially small in terms of $n$.  
For events $\dcc$ we get $w_\dcc(z_0)= O(\frac{1}{n\ln(n)})$. 
This time the most constraining polynomial is $w_\idcc(z)$ for which we have
\[
	w_\idcc(z_0) \sim \frac{\ln(n) r^2}{\ln(n)} \left(\frac{1}{(1+\epsilon)e}\right)^{\ln(n)} = \frac{1}{n} \frac{r^2}{(1+\epsilon)^{\ln(n)}} = o(1/n).
\]
These bounds show that for $w(z)= w_\dg(z_0)+  w_ncc(z) + w_\incc(z)+ w_\dcc(z) +w_\idcc(z)$ we have $w(\frac{1}{1-1/n})= o(1/n)$ hence by Lemma \ref{lm:local} for all large enough $n$ all bad events can be avoided. Therefore for all large enough $n$ every $b$-simple $n$-uniform hypergraph with maximum vertex degree at most $\frac{r^{n-b}}{(1+\epsilon) e^2 \ln(n)}$ is $r$-colorable. That implies 
\[
	D^b(n,r)= \Omega \left(\frac{1}{\ln(n)} r^n \right).
\]
\end{proof} 

\newcommand{\trm}{\mathcal{F}}

Let $H=(V,E)$ be a $b$-simple $n$-uniform hypergraph that is not $r$-colorable.
 Let us consider hypergraph $\trm(H)=(V,E')$, called a trimming of $H$,  whose set of edges is constructed from the original set of edges $E$ by removing from each edge a vertex of that edge with maximum degree. Let $H'= \trm^{(b)}(H)$ (we apply trimming $b$ times to $H$).
Clearly $H'$ is $(n-b)$-uniform, $b$-simple and not $r$-colorable. 
Hence if $n$ is large enough then by Theorem \ref{thm:Dbnr} hypergraph $H'$ contains a vertex $v$ of degree at least  $d =c \frac{r^n}{\ln(n)}$ for some positive constant $c$ (dependent only on $b$ and $r$).  
Vertex $v$ has at least the same degree in $H$. 
Let $F\subset E$ be the edges that contain $v$ and let $Y$ be the set of vertices that were removed from edges of $F$ during the trimming. 
Every vertex of $Y$ have degree at least $d$. 
Any $b$-subset of $Y$ together with $v$ is contained in at most one edge and every edge from $F$  contains some $b$-subset of $Y$.
Therefore for $y=|Y|$ we have $d \leq \binom{y}{b}$ which implies $y \geq d^{1/b}$. 
It means that there is at least $d^{1/b}$ vertices in $H$ of degree at least $d$. To avoid technicalities we assume that $d^{1/b}$ is integer.
Let us consider first $m=d^{1/b}$ of these vertices, we denote them by $v_1, \ldots, v_{m}$. 
Let $d_j$ be the number of edges from $E$ containing $v_j$ which contain at most $b-1$ vertices from $\{v_1, \ldots, v_{j-1}\}$. 
Clearly for every $j$ every $b$-subset of $\{v_1, \ldots, v_{j-1}\}$ determines at most one edge which contains that subset and $v_j$. 
Hence $d_j \geq d-\binom{j-1}{b}$ and therefore 
\[
	\sum_{j=1}^m d_j \geq d^{\frac{b+1}{b}}- \sum_{j=1}^m \binom{j-1}{b} 
	= d^{\frac{b+1}{b}}- \binom{m}{b+1}
	\geq d^{\frac{b+1}{b}}- \frac{m^{b+1} }{(b+1)!} 
	= d^{\frac{b+1}{b}} (1- \frac{1}{(b+1)!}).
\]
Every edge of $H$ contributes at most $b$ to the above sum so the number of edges is at least
\[
	d^{\frac{b+1}{b}} \left(1- \frac{1}{(b+1)!}\right) \frac{1}{b}.
\]
Taking into account the lower bound on $d$ we get that for fixed $b$ and $r$ the minimum number of edges in $n$-uniform $b$-simple hypergraph which is not $r$ colorable is $\Omega\left( \left(\frac{r^n}{\ln(n)} \right)^{b+1/b} \right)$ which justifies Corollary \ref{cor:fnrb}.

\bibliographystyle{siam}
\bibliography{pB}
\end{document}